\DeclareMathAlphabet\oldmathcal{OMS}        {cmsy}{b}{n}
\SetMathAlphabet    \oldmathcal{normal}{OMS}{cmsy}{m}{n}
\DeclareMathAlphabet\oldmathbcal{OMS}       {cmsy}{b}{n}
\newtheorem{problem}{Problem}
\newtheorem{theorem}{Theorem}[section]
\newtheorem{lemma}[theorem]{Lemma}
\newtheorem{proposition}[theorem]{Proposition}
\newtheorem{corollary}[theorem]{Corollary}
\newtheorem{def/prop}[theorem]{Definition/Proposition}
\theoremstyle{definition}
\newtheorem{definition}[theorem]{Definition}
\newtheorem{remark}[theorem]{Remark}
\newtheorem*{ack}{Acknowledgements}
\newtheorem{example}{Example}[section]
\DeclareSymbolFont{bbold}{U}{bbold}{m}{n}
\DeclareSymbolFontAlphabet{\mathbbold}{bbold}
\def\BOne{\mathchoice{\scalebox{1.16}{$\displaystyle\mathbbold 1$}}{\scalebox{1.16}{$\textstyle\mathbbold 1$}}{\scalebox{1.16}{$\scriptstyle\mathbbold 1$}}{\scalebox{1.16}{$\scriptscriptstyle\mathbbold 1$}}}
\def\fract#1#2{\raise4pt\hbox{$ #1 \atop #2 $}}
\def\bbc{{\mathbb C}}
\def\bbr{{\mathbb R}}
\def\bbs{{\mathbb S}}
\def\bbt{{\mathbb T}}
\def\bbu{{\mathbb U}}
\def\gri{\iota}
\def\grk{\kappa}
\def\grr{\rho}
\def\grz{\zeta}
\def\grG{\Gamma}
\def\grS{\Sigma}
\def\bfl{{\bf l}}
\def\bfs{{\bf s}}
\def\bfw{{\bf w}}
\def\bfH{{\bf H}}
\def\bfS{{\bf S}}
\def\bfF{{\bf F}}
\def\bfV{{\bf V}}
\def\cald{{\mathcal D}}
\def\cale{{\mathcal E}}
\def\calf{{\mathcal F}}
\def\calg{{\mathcal G}}
\def\calh{{\mathcal H}}
\def\calk{{\mathcal K}}
\def\cals{{\oldmathcal S}}
\def\calS{{\mathcal S}}
\def\calw{{\mathcal W}}
\def\ga{{\mathfrak a}}
\def\gc{{\mathfrak c}}
\def\ge{{\mathfrak e}}
\def\gh{{\mathfrak h}}
\def\gn{{\mathfrak n}}
\def\go{{\mathfrak o}}
\def\gr{{\mathfrak r}}
\def\gt{{\mathfrak t}}
\def\gu{{\mathfrak u}}
\def\gA{{\mathfrak A}}
\def\gC{{\mathfrak C}}
\def\gR{{\mathfrak R}}
\def\<{\langle}
\def\>{\rangle}
\def\ra#1{\to}
\def\grad{\rm grad}
\def\crit{\rm crit}
\def\fract#1#2{\raise4pt\hbox{$ #1 \atop #2 $}}
\def\hook{\mathbin{\hbox to 6pt{%
                 \vrule height0.4pt width5pt depth0pt
                 \kern-.4pt
                 \vrule height6pt width0.4pt depth0pt\hss}}}
\begin{document}

\title{Some Open Problems in Sasaki Geometry} 
\author{Charles P. Boyer}
\author{ Hongnian Huang}
\author{Eveline Legendre}
\author{Christina W. T{\o}nnesen-Friedman}

\date{\today}
\address{Charles P. Boyer, Department of Mathematics and Statistics,
University of New Mexico, Albuquerque, NM 87131.}
\email{cboyer@unm.edu} 
\address{Hongnian Huang, Department of Mathematics and Statistics,
University of New Mexico, Albuquerque, NM 87131.}
\email{hnhuang@gmail.com} 
\address{Eveline Legendre\\ Universit\'e Paul Sabatier\\
Institut de Math\'ematiques de Toulouse\\ 118 route de Narbonne\\
31062 Toulouse\\ France}
\email{eveline.legendre@math.univ-toulouse.fr}
\address{Christina W. T{\o}nnesen-Friedman, Department of Mathematics, Union
College, Schenectady, New York 12308, USA } \email{tonnesec@union.edu}

\thanks{The first author was partially supported by grant \#519432 from the Simons Foundation. The third author is partially supported by France ANR project EMARKS No ANR-14-CE25-0010. The fourth author was partially supported by grant \#422410 from the Simons Foundation.}
\keywords{Sasakian, Sasaki cone, Killing potential, extremal, cscS}
\subjclass{53C25 primary, 53C21 secondary}

\maketitle

\markboth{Problems in Sasaki Geometry}{C. Boyer, H. Huang, E. Legendre and C. T{\o}nnesen-Friedman}

\section{Introduction}
The purpose of this paper is to discuss two open problems in Sasaki geometry. These problems involve the so-called Sasaki cone which, although different in nature, plays a role in Sasaki geometry similar to that of the K\"ahler cone in K\"ahler geometry.  In the latter it is well understood by simple examples that constant scalar curvature (cscK) metrics need not be isolated and that there are complex manifolds whose K\"ahler cone admits extremal K\"ahler metrics, but no cscK metric. However, in the case of the Sasaki cone there are no known examples of the analogous phenomenon. This leads to two important open problems:

\begin{problem}\label{prob1}
Are constant scalar curvature rays in the Sasaki cone isolated?
\end{problem}

\begin{problem}\label{prob2}
If there are extremal rays of Sasaki metrics in the Sasaki cone, is there always at least one constant scalar curvature ray?
\end{problem}

When the contact bundle $\cald$ has vanishing first class (or more, generally is a torsion class), cscS metrics turn out to be $\eta$--Sasaki--Einstein and up to a transversal homothety, Sasaki--Einstein. In this case, by a famous result of Martelli, Sparks and Yau \cite{MaSpYau06}, the constant scalar curvature ray, whenever it exists, is not only isolated but unique in the Sasaki cone. This fact has been used recently by Donaldson and Sun in \cite{DonSun14,DonSun17} to study the moduli space of compact K\"ahler Fano manifolds and more precisely to prove uniqueness of the rescaled pointed Gromov-Hausdorff limits in this setting. An affirmative answer to Problem \ref{prob1} would be an extension of this very useful Martelli-Sparks-Yau Theorem.  

A partial answer to Problem \ref{prob1} was given by Lemma 4.1 in \cite{BHLT15} as well as by Corollary 1.7 in \cite{BHL17}. In particular the latter says that if the zero set $Z$ of rays of the Sasaki-Futaki invariant lies in a 2-dimensional subcone of the Sasaki cone $\gt^+(\cald,J)$, it is a finite set. Moreover, for toric contact structures on a lens space bundle over $S^2$ the Sasaki cone $\gt^+(\cald,J)$ has dimension $3$ and in \cite{Leg10} it is proved that all constant scalar curvature rays (cscS) are isolated in this case. In Theorem \ref{isothm} below we give another partial result. 

In the case of Problem \ref{prob2} involving the so-called $S^3_\bfw$ join $M\star_\bfl S^3_\bfw$ where $M$ is a Sasaki manifold with constant scalar curvature, it was proven in \cite{BoTo14a} that $M\star_\bfl S^3_\bfw$ has a cscS Sasaki metric. However, this uses the admissible construction of Apostolov, Calderbank, Gauduchon, and T{\o}nnesen-Friedman \cite{ApCaGa06,ACGT04,ACGT08} for which $M$ itself needs to be cscS. On the other hand when the contact bundle $\cald$ has vanishing first class (or more generally is a torsion class), there are known obstructions to the existence of cscS metrics due to Gauntlett, Martelli, Sparks, and Yau \cite{GMSY06}. Moreover, it was shown in \cite{BovCo16} that in all these cases the Sasaki cone admits no extremal metrics whatsoever. In terms of K-stability an affirmative answer to Problem \ref{prob2} is equivalent to stating that any Sasaki cone that admits a K-semistable polarization relative to a fixed maximal torus $\bbt$ also admits a K-semistable polarization with respect to an arbitrary torus (for notions of K-stability in the Sasaki context, cf. \cite{CoSz12,BHLT15,BovCo16}).

The Sasaki cone $\grk(\cald,J)$ can be thought of as the moduli space of Sasaki metrics with a fixed underlying contact CR structure $(\cald,J)$ where $\cald$ is the contact bundle and $J$ is a complex structure on $\cald$. On a Sasaki manifold $M^{2n+1}$ the dimension $k$ of $\grk(\cald,J)$ satisfies $1\leq k\leq n+1$. We are also interested in the moduli space $\ge(\cald,J)$ of extremal Sasaki metrics as well as the moduli space $\grk_{csc}(\cald,J)$ of constant scalar curvature Sasaki metrics. A result of \cite{BGS06} says that $\dim \ge(\cald,J)$ is either $0$ or $k$, and an affirmative answer to Problems \ref{prob1} and \ref{prob2} says that if  $\dim \ge(\cald,J)=k$, the dimension of $\grk_{csc}(\cald,J)$ is exactly $1$. The full moduli space, which is described in \cite{Boy18}, is obtained by varying $J$ also. However, in this note we fix the contact CR structure $(\cald,J)$ to study two important functionals on $\grk(\cald,J)$, the Einstein-Hilbert functional $\bfH$ and the Sasaki energy functional $\cals\cale$.  The variational calculus for $\bfH$ was performed in \cite{BHLT15},  so in this note we derive the Euler-Lagrange equations for $\cals\cale$ and compare the critical sets of $\bfH$ and $\cals\cale$. We end with an application to the case when $M$ is a lens space bundle over a compact Riemann surface.

\begin{ack}
This paper is roughly based on a talk given by the first author at the Australian-German Workshop on Differential Geometry in the Large held at the mathematical research institute MATRIX in Creswick, Victoria, Australia, Feb.2-Feb.14, 2019. He would like to thank MATRIX for its hospitality and support.
\end{ack}

\section{Brief Review of Sasaki Geometry}
Recall that a Sasakian structure on a contact manifold $M^{2n+1}$ of dimension $2n+1$ is a special type of contact metric structure $\cals=(\xi,\eta,\Phi,g)$ with underlying almost CR structure $(\cald,J)$ where $\eta$ is a contact form such that $\cald=\ker\eta$, $\xi$ is its Reeb vector field, $J=\Phi |_\cald$, and $g=d\eta\circ (\BOne \times\Phi) +\eta\otimes\eta$ is a Riemannian metric. $\cals$ is a Sasakian structure if $\xi$ is a Killing vector field and the almost CR structure is integrable, i.e. $(\cald,J)$ is a CR structure. We refer to \cite{BG05} for the fundamentals of Sasaki geometry. We call $(\cald,J)$ a {\it CR structure of Sasaki type}, and $\cald$ a {\it contact structure of Sasaki type}. We shall always assume that the Sasaki manifold $M^{2n+1}$ is compact and connected.

\subsection{The Sasaki Cone}
Within a fixed contact CR structure $(\cald,J)$ there is a conical family of Sasakian structures known as the Sasaki cone. We are also interested in a variation within this family. To describe the Sasaki cone we fix a Sasakian structure $\cals_o=(\xi_0,\eta_o,\Phi_o,g_o)$ on $M$ whose underlying CR structure is $(\cald,J)$ and let $\gt$ denote the Lie algebra of the maximal torus in the automorphism group of $\cals_o$. The {\it (unreduced) Sasaki cone} \cite{BGS06} is defined by
\begin{equation}\label{sascone}
\gt^+(\cald,J)=\{\xi\in\gt~|~\eta_o(\xi)>0~\text{everywhere on $M$}\},
\end{equation}
which is a cone of dimension $k\geq 1$ in $\gt$. The reduced Sasaki cone $\grk(\cald,J)$ is $\gt^+(\cald,J)/\calw$ where $\calw$ is the Weyl group of the maximal compact subgroup of $\gC\gR(\cald,J)$ which, as mentioned previously, is the moduli space of Sasakian structures with underlying CR structure $(\cald,J)$. However, it is more convenient to work with the unreduced Sasaki cone $\gt^+(\cald,J)$. It is also clear from the definition that $\gt^+(\cald,J)$ is a cone under the transverse scaling defined by
\begin{equation}\label{transscale}
\cals=(\xi,\eta,\Phi,g)\mapsto \cals_a=(a^{-1}\xi,a\eta,g_a),\quad g_a=ag+(a^2-a)\eta\otimes\eta, \quad a\in\bbr^+
\end{equation}
So Sasakian structures in $\gt^+(\cald,J)$ come in rays, and since the Reeb vector field $\xi$ is Killing $\dim\gt^+(\cald,J)\geq 1$, and it follows from contact geometry that $\dim\gt^+(\cald,J)\leq n+1$. When $\dim\gt^+(\cald,J)=n+1$ we have a toric contact manifold of Reeb type studied in \cite{BM93,BG00b,Ler02a,Ler04,Leg10,Leg16}. In this case there is a strong connection between the geometry and topology of $(M,\cals)$ and the combinatorics of $\gt^+(\cald,J)$. Much can also be said in the complexity 1 case ($\dim\gt^+(\cald,J)=n$) \cite{AlHa06}.

We let $\gR(\cald,J)$ denote the set of rays in $\gt^+(\cald,J)$, so that $\gt^+(\cald,J)$ is the open cone over the semi-algebraic set $\gR(\cald,J)$ \cite{BoCoRo98}. The combinatorial structure of $\gR(\cald,J)$ can be involved. For example let $M$ be a toric Sasaki manifold that is an $S^1$ bundle over a compact toric Hodge manifold $N$ (or orbifold). A choice of Reeb vector field in $\gt^+(\cald,J)$  gives the intersection of the dual moment cone with a hyperplane giving a generalized Delzant polytope\footnote{We use the term generalized here since it is a Delzant polytope only if the Reeb field is regular and the polytope lies on an integral lattice with primitive normal vectors in which case the quotient is a Hodge manifold. If $P$ lies on the lattice but the normal vectors are not primitive, the Sasakian structure is quasi-regular and the quotient is a Hodge orbifold, and if $P$ does not lie on a integral lattice, the Sasakian structure is irregular and there is no well-defined quotient.} $P$. We think of the interior of the dual polytope $P^*$ as representing the space of rays $\gR(\cald,J)$. For example in the case that $N$ is a Bott manifold \cite{BoCaTo17} of complex dimension $n$, the closure $\overline{\gR(\cald,J)}$ is a {\it cross-polytope} or {\it n-cross} which is dual to $P$ which in this case is combinatorically an n-cube. A 3-cross is an octahedron.

\subsection{The Transverse Holomorphic Structure} 
A Sasakian structure $\cals=(\xi,\eta,\Phi,g)$ not only determines a CR contact structure $(\cald,J)$, but also a transverse holomorphic structure $(TM/\calf_\xi,\bar{J})$ where $\calf_\xi$ is the foliation defined by the Reeb vector field $\xi$. Here instead of fixing a contact structure $\cald$ we fix the Reeb vector field $\xi$. This gives a contractible space of Sasakian structures, viz
\begin{equation}\label{Sasspace}
{\mathcal S}(\xi,\bar{J})=\{\varphi\in C^\infty_B(M)~|~(\eta+d^c_B\varphi)\wedge(d\eta +i\partial_B\bar{\partial}_B\varphi)^n\neq 0,\quad \int_M\varphi\eta\wedge(d\eta)^n=0\},
\end{equation}
where $d^c_B=\frac{i}{2}(\bar{\partial}-\partial)$. The space ${\mathcal S}(\xi,\bar{J})$ is an infinite dimensional Frech\'et manifold. Each Reeb vector field gives an isotopy class ${\mathcal S}(\xi,\bar{J})$ of contact structures, and deter-
mines a basic cohomology class $[d\eta]_B\in H^{1,1}(\calf_\xi)$, and each representative determines a
transverse K\"ahler structure with transverse K\"ahler metric $g^T=d\eta\circ (\BOne\times \Phi)$. Note that $d\eta$ is not exact as a basic cohomology class, since $\eta$ is not a basic 1-form. We want to search for a `preferred' Sasakian structure $\cals_\varphi$ which represents the cohomology class $[d\eta]_B$. This leads to the study \cite{BGS06} of the Calabi functional given by Equation \eqref{calabifunct} below.

\subsection{The Lie Algebra of Killing Potentials}\label{KPsec}
Note that for a CR structure of Sasaki type the group $\gC\gR(\cald,J)$ of CR transformations has dimension
at least one. Moreover, if $M$ is compact $\gC\gR(\cald,J)$ is a compact Lie group except for
the standard CR structure on the sphere $\bbs^{2n+1}$ where $\gC\gR(\cald,J)=\bbs\bbu(n+1,1)$. We are mainly concerned with reducing things to a maximal torus $\bbt^k$ in $\gC\gR(\cald,J)$ where $1\leq k\leq n+1$, and its Lie algebra $\gt$. Before doing so we briefly discuss the holomorphic viewpoint which gives rise to an infinite dimensional Lie algebra $\gh^T(\xi,\bar{J})$ of transverse holomorphic vector fields; however, the infinite dimensional part of $\gh^T(\xi,\bar{J})$ is generated by the smooth sections $\grG(\xi)$ of line bundle generated by the Reeb vector field, so we have a finite dimensional quotient algebra $\gh^T(\xi,\bar{J})/\grG(\xi)$ whose complexification consists of the complexification of the Lie algebra $\ga\gu\gt(\cals)$ of the Sasaki automorphism group together with a possible non-reductive part. See \cite{BGS06} for details. Here we concern ourselves with the Abelian Lie algebras $\gt^\bbc=\gt\otimes\bbc$ and $\gt$ associated to the maximal torus action. We refer to the potentials associated to $\gt^\bbc$ as {\it holomorphy potentials} since the action of elements of $\gt^\bbc$ is transversely holomorphic. 

Consider the strict contact moment map $\mu:M\ra{1.8} \gc\go\gn(M,\eta)^*$ with respect to the Fr\'echet Lie group of strict contact transformations $\gC\go\gn(M,\eta)$ defined by
\begin{equation}\label{etamommap}
\langle\mu(x),X\rangle=\eta(X).
\end{equation} 
The function $\eta(X)$ is basic with respect to $\xi$ and there is a Lie algebra isomorphism between $\gc\go\gn(\eta)$ and the Lie algebra of smooth $\xi$ invariant functions $C^\infty(M)^\xi$ with Lie algebra structure given by the Jacobi-Poisson bracket defined by
\begin{equation}\label{JP}
\{f,g\}=\eta([{\rm X^f},{\rm X^g}])
\end{equation}
where $d(\eta(X^f))=-X^f\hook d\eta$. This isomorphism extends to  a Lie algebra isomorphism between $\gc\go\gn(\cald)$ and all smooth functions $C^\infty(M)$. Each choice of contact form $\eta$ or equivalently Reeb vector field $\xi$ defines such an isomorphism. We have

\begin{lemma}\label{infcont}
Any $X\in\gc\go\gn(\eta)$ can be written uniquely\footnote{The sign $\pm \Phi\circ\grad_T~\pi_gs_g^T$ in the formula depends on the sign convention for the transverse K\"ahler form $d\eta$. We use the convention  $g^T=d\eta\circ (\BOne\times \Phi)$ which gives a plus sign.} as
$$X=\Phi\circ\grad_T~\eta(X)+\eta(X)\xi$$
where the gradient is taken with respect to the transverse metric $g^T$. 
\end{lemma}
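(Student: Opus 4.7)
The plan is to exploit the splitting $TM = \cald \oplus \bbr\xi$, which gives every vector field a unique decomposition
\[
X = X^\cald + \eta(X)\xi, \qquad X^\cald \in \cald,
\]
so that the uniqueness claim in the statement is automatic. The content of the lemma is then to identify the horizontal part $X^\cald$ with $\Phi\circ\grad_T \eta(X)$. Before invoking the transverse gradient, I would first check that $\eta(X)$ is $\xi$--invariant, hence basic. Since $\call_\xi\eta=0$, we have $\xi\cdot\eta(X) = \eta([\xi,X])$, and on the other hand evaluating the hypothesis $\call_X\eta=0$ on $\xi$ yields $\eta([\xi,X])=0$. Thus $\grad_T\eta(X)$ makes sense as a section of $\cald$.

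Next, I would apply Cartan's formula to $\call_X\eta = 0$ to get $d(\eta(X)) = -\iota_X d\eta$, and then use $\iota_\xi d\eta = 0$ to rewrite this, for every $Y\in\cald$, as
\[
d(\eta(X))(Y) = d\eta(Y, X^\cald).
\]
With the sign convention $g^T(Y,Z) = d\eta(Y,\Phi Z)$ on $\cald$, the defining identity of $\grad_T$ for a basic function $f$ takes the form $df(Y) = d\eta(\grad_T f, \Phi Y)$. Applying the $\Phi$--invariance of $d\eta|_\cald$ together with $\Phi^2 = -\BOne$ on $\cald$ gives
\[
d\eta(\grad_T f, \Phi Y) \;=\; d\eta(\Phi\grad_T f, \Phi^2 Y) \;=\; -d\eta(\Phi\grad_T f, Y) \;=\; d\eta(Y, \Phi\grad_T f).
\]
Setting $f = \eta(X)$ and comparing with the identity obtained from Cartan's formula,
\[
d\eta\bigl(Y,\, X^\cald - \Phi\grad_T\eta(X)\bigr) \;=\; 0 \qquad \text{for all } Y\in\cald.
\]
The non-degeneracy of $d\eta$ on $\cald$ then forces $X^\cald = \Phi\circ \grad_T\eta(X)$, completing the proof.

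There is no real obstacle here; the only point requiring care is the bookkeeping in the $\Phi$--invariance step, where the sign convention chosen for $g^T$ matches the $+\Phi$ in the formula, precisely the dependence flagged in the footnote attached to the statement.
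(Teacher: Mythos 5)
Your proof is correct and follows essentially the same route as the paper's one-line argument: Cartan's formula applied to $\pounds_X\eta=0$ gives $d(\eta(X))=-\iota_Xd\eta$, and the identification of the horizontal part with $\Phi\circ\grad_T\,\eta(X)$ then comes from the sign convention $g^T=d\eta\circ(\BOne\times\Phi)$ and the non-degeneracy of $d\eta$ on $\cald$. Your version simply spells out the details the paper leaves implicit (basicness of $\eta(X)$, the $\Phi$-invariance bookkeeping, and the automatic uniqueness from the splitting $TM=\cald\oplus\bbr\xi$).
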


\begin{proof}
This follows from
$$0=\pounds_{X}\eta=d(\eta(X))+\Phi\circ \grad_T~\eta(X)\hook d\eta.$$
\end{proof}

We now restrict attention to the finite dimensional Lie subalgebra $\ga\gu\gt(\cals)$ of the compact Lie group $\gA\gu\gt(\cals)$ of Sasaki automorphisms. These are Killing vector fields in $\gc\go\gn(\eta)$ that commute with $\xi$ and leave $\Phi$ invariant. They also leave invariant the transverse K\"ahler structure. Following the K\"ahler case (cf. \cite{Gau09b}) we say that $\eta(X)$ is a {\it Killing potential} (for $\cals$) when $X\in\ga\gu\gt(\cals)$. We denote\footnote{We only consider those Sasakian structures in the family given by $\gt^+(\cald,J)$, so a choice of $\cals$ is equivalent to a choice of Reeb vector field $\xi\in\gt^+(\cald,J)$ which is equivalent to specifying the Sasaki metric $g$ of $\cals$. We often abuse notation and label objects by $\xi,\eta,g$ or $\cals$ depending on the emphasis.} by ${\calk}^\xi$ the real vector space spanned by all the Killing potentials. ${\calk}^\xi$ forms a Lie subalgebra of $C^\infty(M)^\xi$ isomorphic to the Lie algebra $\ga\gu\gt(\cals)$. 
We actually restrict further by considering a real maximal torus $\bbt^k$ of $\gA\gu\gt(\cals)$, and its real Lie algebra $\gt$. We let $\calh^\xi$ denote the subalgebra of $\calk^\xi$ that is isomorphic to $\gt$. Choosing a maximal torus in $\gA\gu\gt(\cals)$ is equivalent to choosing maximal torus in $\gC\gR(\cald,J)$ \cite{BGS06}, so the Lie algebra $\gt$ is independent of the choice of $\cals\in\gt^+(\cald,J)$; however, we emphasize that $\calh^\xi$ depends on $\xi\in\gt^+(\cald,J)$, since the isomorphism $\grz\mapsto \eta(\grz)\in \calh^\xi$ does. We are interested in how $\calh^\xi$ changes as $\xi$ varies in $\gt^+(\cald,J)$, so we define 
\begin{equation}\label{Killpotlgt}
\calh=\bigcup_{\xi\in\gt^+(\cald,J)}\calh^\xi.
\end{equation}
Since any $\xi$ is in the center of $\gc\go\gn(\eta)$ we have
$$\bigcap_{\{\xi\in\gt^+(\cald,J)\}}\gc\go\gn(\eta)=\gt.$$

\begin{lemma}\label{Killpotl}
Let $(M,\cald,J,\bbt^k)$ be a contact CR manifold of Sasaki type with an effective action of a torus $\bbt^k$ that preserves the CR structure. Then
\begin{enumerate}
\item the elements of $\calh$ are basic with respect to every $\xi\in\gt^+(\cald,J)$. Equivalently, $\calh\subset C^\infty(M)^\bbt$;
\item the nonconstant elements of $\calh^{\xi_1}$ and $\calh^{\xi_2}$ are related by 
$$\eta_2(\grz)=\frac{1}{\eta_1(\xi_2)}\eta_1(\grz);$$
giving an isomorphism $\calh^{\xi_1}\approx\calh^{\xi_2}$ of Abelian Lie algebras;
\item if $\xi_1,\xi_2\in\gt^+(\cald,J)$ are not colinear, then $\calh^{\xi_1}\cap\calh^{\xi_2}=\bbr$ where $\bbr$ denotes the constants;
\item each choice of $\xi\in\gt^+(\cald,J)$ gives a Lie algebra monomorphism $\gri_\xi:\bbr\ra{2.5} \calh^\xi$
defined by $\gri_\xi(a)=a\eta(\xi)=a$.
\end{enumerate}
\end{lemma}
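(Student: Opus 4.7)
The plan is to address each of the four assertions in turn; (1), (2), and (4) follow quickly, while (3) is the heart of the lemma. For (1), since $\gt$ is abelian and the torus $\bbt$ preserves every Sasakian structure in $\gt^+(\cald,J)$, for any $\zeta,\xi'\in\gt$ the identity $\pounds_{\xi'}\bigl(\eta(\zeta)\bigr)=\bigl(\pounds_{\xi'}\eta\bigr)(\zeta)+\eta([\xi',\zeta])=0$ shows that every Killing potential is $\bbt$-invariant, hence basic with respect to every $\xi\in\gt^+(\cald,J)$. For (2), the hypothesis $\ker\eta_1=\cald=\ker\eta_2$ forces $\eta_2=h\,\eta_1$ for a positive function $h$; evaluating at $\xi_2$ gives $h=1/\eta_1(\xi_2)$, and the stated formula follows by applying this to $\zeta$. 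The resulting Lie-algebra isomorphism $\calh^{\xi_1}\approx\calh^{\xi_2}$ of abelian algebras is then the common identification with $\gt$ via $\zeta\mapsto\eta_i(\zeta)$. For (4), $\iota_\xi(a)=a\,\eta(\xi)=a$ is the inclusion of constants into $\calh^\xi$; since the contact Hamiltonian of the constant $a$ is $a\xi$, one has $\{a,b\}=\eta([a\xi,b\xi])=0$, matching the trivial bracket on $\bbr$.

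For (3), assume $f=\eta_1(\zeta_1)=\eta_2(\zeta_2)$ for some $\zeta_1,\zeta_2\in\gt$, and set $\phi:=\eta_1(\xi_2)$, so by (2) we have $\eta_1(\zeta_2)=\phi f$. Apply Lemma~\ref{infcont} inside $\cals_1$ to each of $\zeta_1,\zeta_2,\xi_2$; combining the three decompositions and using the Leibniz rule $\grad_{T,1}(\phi f)=\phi\,\grad_{T,1}f+f\,\grad_{T,1}\phi$ causes all terms involving the transverse gradient to cancel, yielding the key vector-field identity
\[
\zeta_2-\phi\,\zeta_1=f\,(\xi_2-\phi\,\xi_1)
\]
on $M$. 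The idea is then to lift this pointwise identity in $TM$ to an identity in the fixed finite-dimensional vector space $\gt$: pick a point $x_0$ in the open dense free locus of $\bbt^k$, where the evaluation $\gt\hookrightarrow T_{x_0}M$ is injective; pulling back through this injection gives
\[
\zeta_2=\phi(x_0)\,\zeta_1+f(x_0)\,\xi_2-f(x_0)\phi(x_0)\,\xi_1\qquad\text{in }\gt,
\]
and this must hold for every free $x_0$ while the left-hand side is a fixed element.

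Rigidity now forces a case split. If $\zeta_1,\xi_1,\xi_2$ were linearly independent in $\gt$, matching coefficients in a basis extending this triple would force $\phi$ constant on the free locus; but $d\phi=-\xi_2\hook d\eta_1$ together with $\ker d\eta_1=\bbr\,\xi_1$ would then give $\xi_2\in\bbr\,\xi_1$, contradicting the hypothesis. Hence $\zeta_1=\lambda\xi_1+\mu\xi_2$ for scalars $\lambda,\mu$, so $f=\lambda+\mu\phi$; substituting reduces the identity to $\zeta_2=-\mu\phi^2\xi_1+(\lambda+2\mu\phi)\xi_2$ in $\gt$. Independence of $\xi_1,\xi_2$ in $\gt$ together with non-constancy of $\phi>0$ forces $\mu=0$, whence $f=\lambda\in\bbr$. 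The main obstacle is precisely this last step: the vector-field identity itself is a routine Sasaki computation, but the rigidity argument — transferring a pointwise identity in $TM$ into one in $\gt$ and then playing off the non-constancy of $\phi$ against the fixedness of $\zeta_2$ — is the substantive content.
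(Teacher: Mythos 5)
Your proof is correct, and for items (1), (2) and (4) it coincides with what the paper does (the paper records (1) as ``well known'' and (4) as ``clear''; your Lie-derivative computation and the Poisson-bracket check are exactly the standard fillers). The genuine difference is in item (3). After using (2) to reduce to the identity $\eta_1(\grz)\,\eta_1(\xi_2)=\eta_1(\grz')$, the paper simply asserts that ``the only way'' both functions can lie in $\calh^{\xi_1}$ is that $\grz=a\xi_1$ and $\grz'=a\xi_2$, offering no justification for this multiplicative rigidity of the finite-dimensional space of Killing potentials. You actually prove it: the threefold application of Lemma \ref{infcont} inside $\cals_1$ together with the Leibniz rule gives the pointwise identity $\grz_2-\phi\grz_1=f(\xi_2-\phi\xi_1)$ in $TM$, and evaluating at a point with trivial isotropy --- such points form an open dense set because an effective action of a connected torus on a connected manifold has free principal orbits --- turns this into a linear relation in $\gt$ with point-dependent coefficients. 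Comparing coefficients against the fixed element $\grz_2$, and using that $\phi=\eta_1(\xi_2)$ cannot be constant when $\xi_1,\xi_2$ are not colinear (since $d\phi=-\xi_2\hook d\eta_1$ and $\ker d\eta_1=\bbr\,\xi_1$ would force $\xi_2\in\bbr\,\xi_1$), kills both the case $\grz_1\notin\mathrm{span}(\xi_1,\xi_2)$ and the coefficient $\mu$, leaving $f$ constant. This is a complete argument where the paper's is only an assertion; the one cosmetic omission is that you do not restate the easy inclusion $\bbr\subset\calh^{\xi_1}\cap\calh^{\xi_2}$, which is immediate from your item (4).
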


\begin{proof}
Item (1) is well known. 
Given $\xi_1,\xi_2\in\gt^+(\cald,J)$ the corresponding contact forms $\eta_1$ and $\eta_2$ satisfy $\eta_2=f\eta_1$ for some nowhere vanishing smooth function $f$. But since $\xi_2$ is the Reeb field of $\eta_2$ this implies $f=\frac{1}{\eta_1(\xi_2)}$ proving the first part of (2). That we have a Lie algebra isomorphism follows from the Abelian nature of $\calh^\xi$.

To prove (3) for any $a\in\bbr$ take $\grz=a\xi_1$ and $\grz'=a\xi_2$ then $\eta_1(\grz)=a=\eta_2(\grz')$. So $\calh^\xi\cap\calh^{\xi'}$ contains the constants. Conversely, if $\eta_1(\grz)=\eta_2(\grz')$ for some $\grz,\grz'\in\gt$, then (2) implies 
$$\eta_1(\grz)=\frac{\eta_1(\grz')}{\eta_1(\xi_2)}.$$
But then the only way that $\eta_1(\grz)$ and $\eta_1(\grz')$ can both be in $\calh^{\xi_1}$ is that $\grz'=a\xi_2$ and $\grz=a\xi_1$ which implies (3). Item (4) is clear.
\end{proof}

We call the element $\eta_1(\xi_2)\in \calh^{\xi_1}$ the {\it transfer function} from $\eta_1$ to $\eta_2$. Note that the smooth functions $\eta_1(\xi_2)$ and $\eta_2(\xi_1)$ are invariant under the same transverse scaling $(\xi_1,\xi_2)\mapsto (a^{-1}\xi_1,a^{-1}\xi_2)$ and satisfy the following relations:
\begin{equation}\label{multeqn} 
\eta_1(\xi_1)=1,\qquad \eta_2(\xi_2)=1,\qquad \eta_2(\xi_1)\eta_1(\xi_2)=1.
\end{equation}

\section{Extremal Sasaki Geometry}
The notion of extremal K\"ahler metrics was introduced as a variational problem by Calabi in \cite{Cal56} and studied in greater depth in \cite{Cal82}. This was then emulated in \cite{BGS06} for the Sasaki case, namely  
\begin{equation}\label{calabifunct}
\cale_2(g)=\int_Ms^2_gdv_g
\end{equation}
where the variation is taken over the space $\calS(\xi,\bar{J})$. As in the K\"ahler case the Euler-Lagrange equation is a 4th order PDE 
\begin{equation}\label{ELeqn2}
\mathscr{L}\varphi=(\bar{\partial}\partial^\#)^*\bar{\partial}\partial^\#\varphi =\frac{1}{4}\bigl(\Delta_B^2\varphi +4g(\grr^T,i\partial \bar{\partial}\varphi)+2(\partial s^T)\hook\partial^\#\varphi\bigr) =0
\end{equation}
whose critical points are those Sasaki metrics whose $(1,0)$ gradient $\partial^\#s_g$ of the scalar curvature $s_g$ is transversely holomorphic. Such Sasaki metrics (structures) are called {\it extremal}. An important special case are the Sasaki metrics of constant scalar curvature (cscS) in which case $\partial^\#s_g$ is the zero vector field. 

Since both the volume functional and the total transverse scalar curvature functional do not depend on the representative in $\calS(\xi,\bar{J})$, the functional \eqref{calabifunct} is essentially equivalent to the functional
\begin{equation}\label{calabifunct2}
\cale^T_2(g)=\int_M(s^T_g)^2dv_g.
\end{equation}
This latter functional has the advantage of behaving nicely under transverse scaling \eqref{transscale}. It is important to realize that if $\cals$ is extremal, so is $\cals_a$ for all $a\in\bbr^+$, and if $g$ has constant scalar curvature so does $g_a$ for all $a\in\bbr^+$. So extremal and cscS Sasakian structures come in rays.

\subsection{Transverse Futaki-Mabuchi}
The Sasaki version $\chi$ of the Futaki-Mabuchi vector field \cite{FuMa95} was introduced in \cite{BovCo16} and used to define the Sasaki version of K-relative stability. Following \cite{FuMa95} we consider the $L^2$ inner product $\langle\cdot,\cdot \rangle$ on the polarized Sasaki manifold $(M,\cals)$, or more generally the inner product on tensors, $p$-form, functions, etc.
\begin{equation}\label{L2g}
\langle \alpha,\beta\rangle = \int_M g(\alpha,\beta) dv_g.
\end{equation}
The $L^2$ inner product on functions induces an inner product on the Lie algebra $\gt$ that depends on the choice of Reeb vector field $\xi\in\gt^+(\cald,J)$,
\begin{equation}\label{FMinprod}
\langle \zeta,\zeta'\rangle_\xi = \langle \eta(\zeta),\eta(\zeta')\rangle =\int_M\eta(\grz)\eta(\grz')dv_g.
\end{equation}

\begin{remark}
In fact, the inner product \eqref{FMinprod} defines an inner product on the Fr\'echet Lie algebra $\gc\go\gn(\eta)$; however, we shall only make use of it on the Abelian subalgebra $\gt$.
\end{remark}


Then for each $\xi\in\gt^+$ the inner product \eqref{FMinprod} gives an orthogonal splitting
\begin{equation}\label{orthogsplitgt}
\gt=\bbr\xi\oplus \gt_0,
\end{equation}
and under the isomorphism $\gt\approx \calh^\xi$ we have orthogonal splittings
\begin{equation}\label{orthogsplitcalh}
\calh^\xi\approx \bbr\oplus \calh^\xi_0
\end{equation}
where $\bbr$ denotes the constants and 
$$\calh^\xi_0=\{\eta(\grz)~|~\int_M\eta(\grz)dv_\xi=0,~ \grz\in\gt\}.$$

Letting $\gh^T(\xi,\bar{J})$ denote the Lie algebra of transverse holomorphic vector fields on $(M,\cals)$, we
recall the Sasaki-Futaki invariant \cite{BGS06} (or transversal Futaki invariant) $\bfF_\xi:\gh^T(\xi,\bar{J})\ra{2.1} \bbc$ defined by 
$$\bfF_\xi(X)=\int X\psi_gdv_\xi$$
where the basic transverse Ricci potential $\psi_g$ satisfies $\grr^T=\grr^T_h+i\partial\bar{\partial}\psi_g$ and $\grr^T_h$ is the harmonic part of the transverse Ricci form $\grr^T$. By Proposition 5.1 of \cite{BGS06} $\bfF_\xi$ only depends on the class $\calS(\xi,\bar{J})$, and we know that $\bfF_\xi$ is degenerate on $\gh^T(\xi,\bar{J})$ since it vanishes on the infinite dimensional subalgebra of sections $\grG(L_\xi)$ of the line bundle $L_\xi$. So we restrict attention to the finite dimensional Lie algebra $\gh^T_\xi= \gh^T(\xi,\bar{J})/\grG(L_\xi)$. From this we get a map $\bfF:\gt^+\times \gh^T_\xi:\ra{2.1} \bbc$ defined by $\bfF(\xi,X)=\bfF_\xi(X)$. By Lemma 4.6 of \cite{BGS06} it follows that if $\grz\in\gt$ then $\Phi\grz\in\gh^T_\xi$
which gives the map $\bfF:\gt^+\times \gt\ra{2.1} \bbr$ defined by $\bfF(\xi,a)=\bfF_\xi(\Phi(a))$. So for each Sasakian structure $\cals\in\gt^+$ we have its Sasaki-Futaki invariant $\bfF_\xi\circ\Phi:\gt\ra{2.5} \bbr$ on $\gt$ defined by
\begin{equation}\label{Futinv}
\bfF_\xi\circ\Phi(\grz)=\int \Phi\grz\psi_gdv_\xi.
\end{equation}

\begin{definition}\label{FMvf}\cite{BovCo16}
We define the {\it Sasaki-Futaki-Mabuchi vector field} $\chi_\xi$ to be the dual of $\bfF_\xi\circ\Phi$ with respect to the inner product \eqref{FMinprod} on $\gt$, that is $\bfF_\xi\circ\Phi(\grz)=\langle\chi_\xi,\grz\rangle_\xi$.
\end{definition}

So the Sasaki-Futaki invariant becomes
\begin{equation}\label{Futinv2}
\bfF_\xi\circ\Phi(\grz)=\int_M \eta(\grz)\eta(\chi_\xi)dv_\xi.
\end{equation}
The fact that $\bfF_\xi\circ\Phi(\xi)=0$ implies 
\begin{equation}\label{chinorm}
\langle\xi,\chi_\xi\rangle_\xi=\int_M\eta(\chi_\xi)dv_\xi=0,
\end{equation}
or equivalently $\eta(\chi_\xi)\in\calh^\xi_0$ and $\chi_\xi\in\gt_0$. 

Consider the projection $\pi_g:C^\infty(M)^\xi\longrightarrow \calh^\xi$ onto the space $\calh^\xi$ of Killing potentials, or equivalently $\pi:\gc\go\gn(\eta)\longrightarrow \gt$. From the orthogonal decomposition \eqref{orthogsplitcalh} we see that the projection of the scalar curvature $s^T_g$ onto the constants $\bbr$ is just the average scalar curvature $\bar{\bfs}_\xi$ defined by $\bar{\bfs}_\xi=\frac{\bfS_\xi}{\bfV_\xi}$  where $\bfS_\xi$ is the total transverse scalar curvature of $\cals$ and $\bfV_\xi$ is its volume. As in \cite{FuMa95} we have
\begin{lemma}\label{Futchi}
The Sasaki-Futaki-Mabuchi vector field $\chi_\xi$ can be written uniquely as
$$\chi_\xi=\Phi\circ\grad_T~\pi_gs_g^T+(\pi_gs^T_g-\bar{\bfs}^T_g)\xi$$
where the gradient is taken with respect to the transverse metric $g^T$. Moreover, $\chi_\xi$ is independent of the choice of $\cals\in\calS(\xi,\bar{J})$.
\end{lemma}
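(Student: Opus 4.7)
The plan is first to apply Lemma \ref{infcont} to $\chi_\xi\in\gt\subset\gc\go\gn(\eta)$, yielding
$$\chi_\xi = \Phi\circ\grad_T\eta(\chi_\xi) + \eta(\chi_\xi)\,\xi,$$
and then to identify $\eta(\chi_\xi)$ with $\pi_g s^T_g - \bar{\bfs}^T_g$; the displayed formula follows because $\bar{\bfs}^T_g$ is a constant and therefore has vanishing transverse gradient.

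For the identification, fix $\grz\in\gt$ and set $f=\eta(\grz)$. Applying Lemma \ref{infcont} to $\grz$ together with $\Phi\xi=0$, $\Phi^2 = -\mathrm{id}+\eta\otimes\xi$, and $\grad_T f\in\cald$ yields $\Phi\grz = -\grad_T f$. Substituting into \eqref{Futinv} and integrating by parts using self-adjointness of the basic Laplacian gives
$$\bfF_\xi\circ\Phi(\grz) = -\int_M g^T(\grad_T f,\grad_T\psi_g)\,dv_g = \int_M f\,\Delta_B\psi_g\,dv_g.$$
Since $\grr^T_h$ is the harmonic representative of the basic class $[\grr^T]_B$, its $g^T$-trace is the constant $\bar{\bfs}^T_g$, so tracing $\grr^T=\grr^T_h+i\partial\bar\partial\psi_g$ gives $\Delta_B\psi_g = s^T_g-\bar{\bfs}^T_g$ in the conventions of \cite{BGS06}. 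Therefore
$$\bfF_\xi\circ\Phi(\grz) = \int_M \eta(\grz)\bigl(s^T_g-\bar{\bfs}^T_g\bigr)\,dv_g = \int_M \eta(\grz)\bigl(\pi_g s^T_g-\bar{\bfs}^T_g\bigr)\,dv_g,$$
where the last equality uses that $\eta(\grz)\in\calh^\xi$ and that $\pi_g$ is $L^2$-orthogonal projection onto $\calh^\xi$ (a constant lies in $\calh^\xi$ and is its own projection). Comparing with $\bfF_\xi\circ\Phi(\grz)=\langle\chi_\xi,\grz\rangle_\xi = \int_M\eta(\chi_\xi)\eta(\grz)\,dv_g$ for every $\grz\in\gt$, and using that the restriction of \eqref{FMinprod} to $\calh^\xi$ is positive definite, forces $\eta(\chi_\xi) = \pi_g s^T_g-\bar{\bfs}^T_g$.

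The independence of $\chi_\xi$ under change of $\cals\in\calS(\xi,\bar{J})$ is the subtler part. While $\bfF_\xi$ depends only on the class $\calS(\xi,\bar{J})$ by Proposition 5.1 of \cite{BGS06}, both $\eta_t(\grz)$ and $dv_{g_t}$ move along a smooth path $\cals_t\in\calS(\xi,\bar{J})$ generated by a basic potential, so the inner product $\langle\cdot,\cdot\rangle_{\xi,t}$ varies with $t$ and a priori the dual element $\chi_\xi\in\gt$ could too. Following the Futaki--Mabuchi strategy of \cite{FuMa95}, I would differentiate the defining identity $\langle\chi_{\xi,t},\grz\rangle_{\xi,t}=\bfF_\xi\circ\Phi(\grz)$ along the path and show $\dot\chi_{\xi,t}=0$ using the standard transverse K\"ahler variation formulas for $s^T_{g_t}$, $\pi_{g_t}$, $\grad_T$, and $dv_{g_t}$. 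The main obstacle is exactly this cancellation: each geometric quantity in the explicit formula for $\chi_\xi$ moves as $\cals_t$ deforms, and the argument succeeds only because the variations combine through an integration-by-parts identity rooted in the transverse Lichnerowicz operator $\mathscr{L}$ of \eqref{ELeqn2}; this is the place where the proof genuinely needs transverse K\"ahler geometry rather than being a formal consequence of the K\"ahler argument.
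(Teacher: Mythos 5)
Your derivation of the displayed formula is correct and follows essentially the paper's route: apply Lemma \ref{infcont} to $\chi_\xi\in\gt\subset\gc\go\gn(\eta)$ and then identify $\eta(\chi_\xi)$. Where the paper simply cites \cite{BGS06} for the fact that $\bfF_\xi\circ\Phi$ is the transverse Futaki invariant of $g^T$, invokes \cite{FuMa95} to conclude $\eta(\chi_\xi)=\pi_gs^T_g$ up to a constant, and then fixes the constant from $\int_M\eta(\chi_\xi)\,dv_g=0$, you instead compute the duality directly from \eqref{Futinv} via $\Phi\grz=-\grad_T\,\eta(\grz)$, integration by parts, and the trace of the transverse Ricci decomposition, which yields the constant $-\bar{\bfs}^T_g$ in one stroke. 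That is more self-contained than the paper's citation, and the sign ambiguities you flag are acknowledged by the paper itself in the footnote to Lemma \ref{infcont}. The use of positive-definiteness of \eqref{FMinprod} on $\calh^\xi$ to pin down $\eta(\chi_\xi)$ is also fine.

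The gap is in the final statement, which you correctly diagnose but do not close: invariance of $\bfF_\xi\circ\Phi$ (Proposition 5.1 of \cite{BGS06}) is indeed not enough by itself, since the inner product \eqref{FMinprod} could a priori vary with $\cals\in\calS(\xi,\bar{J})$, and your proof stops at ``I would differentiate the defining identity.'' The missing ingredient is precisely the transverse analogue of Theorem A of \cite{FuMa95}: the bilinear form \eqref{FMinprod} on $\gt$ depends only on the isotopy class. This is the content of Proposition \ref{inprodisotopy} (and Proposition \ref{FMmultiinv}); although the paper presents it after the lemma, it is an independent fact, proved by differentiating $\int_M\eta_t(\grz)\eta_t(\grz')\,dv_t$ along $\eta_t=\eta+d^c_B\varphi_t$ and integrating by parts, with no input from the present lemma. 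Once both the linear functional $\bfF_\xi\circ\Phi$ and the inner product are known to be invariants of $\calS(\xi,\bar{J})$, the dual vector field $\chi_\xi$ is automatically invariant by finite-dimensional linear algebra; contrary to your closing remark, no cancellation involving the Lichnerowicz operator $\mathscr{L}$ of \eqref{ELeqn2} is required. To be fair, the paper's own one-line justification of this statement leans on the same unstated input, so your proposal is no less complete than the original on this point once the invariance of the bilinear form is supplied.
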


\begin{proof}
By Definition \ref{FMvf} $\chi_\xi$  is the unique vector field in $\gt_0$ that is dual to the Sasaki- Futaki invariant. Since $\gt$ is a subalgebra of $\gc\go\gn(\eta)$ and $\chi_\xi\in\gt$, Lemma \ref{infcont} says that $\chi_\xi$ takes the form $\Phi\circ\grad_T~\eta(\chi_\xi)+\eta(\chi_\xi)\xi$. But from \cite{BGS06} $F_\xi\circ\Phi$ is the transverse Futaki invariant with respect to the transverse K\"ahler metric $g^T$. Thus, $\chi_\xi$ is just the transverse Futaki-Mabuchi vector field with respect to $g^T$ which implies that $\eta(\chi_\xi)=\pi_gs^T_g$ up to a constant. But then since $\chi_\xi\in\gt_0$ and $s^T_g-\pi_gs^T_g$ is orthogonal to the constants we have
$$0=\int_M\eta(\chi_\xi)dv_g=\int_M\pi_gs^T_gdv_g+c\int_Mdv_g=\int_M(s^T_g+c)dv_g=(\bar{\bfs}^T_g+c)\bfV_g$$
which gives the result. The last statement follows from Definition \ref{FMvf} and Proposition 5.1 of \cite{BGS06}.
\end{proof}

It is easy to obtain the relationship between extremality and Killing potentials. 

\begin{lemma}\label{extremlem}
On a Sasaki manifold the following are equivalent:
\begin{enumerate}
\item $\cals$ is extremal;
\item $s^T_g\in\calh^\xi$;
\item $\pi_gs^T_g=s^T_g$;
\item $\pi_gs^T_g-\bar{\bfs}^T_g \in\calh^\xi_0$.
\end{enumerate}
Moreover, $\chi_\xi=0$ if and only if $\pi_gs^T_g=\bar{\bfs}^T_g$ whether it is extremal or not.
\end{lemma}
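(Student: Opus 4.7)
The plan is to prove the equivalences in the order $(3)\Leftrightarrow(2)\Leftrightarrow(1)\Rightarrow(4)$, with $(4)\Rightarrow(3)$ following from the orthogonality of the decomposition \eqref{orthogsplitcalh}, and then to read off the ``moreover'' clause directly from Lemma \ref{Futchi}.

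The equivalence $(2)\Leftrightarrow(3)$ is by construction: $\pi_g$ is the $L^2$ orthogonal projection onto the finite-dimensional subspace $\calh^\xi\subset C^\infty(M)^\xi$, so its fixed points are precisely $\calh^\xi$. The step $(3)\Rightarrow(4)$ is then straightforward: if $\pi_gs^T_g=s^T_g$, then $\pi_gs^T_g-\bar{\bfs}^T_g=s^T_g-\bar{\bfs}^T_g$ lies in $\calh^\xi$ and integrates to zero by definition of $\bar{\bfs}^T_g$. For $(4)\Rightarrow(3)$ the key observation is that the constants lie in $\calh^\xi$, so testing orthogonality against $1\in\calh^\xi$ yields $\int_M\pi_gs^T_g\,dv_g=\int_Ms^T_g\,dv_g=\bar{\bfs}^T_g\bfV_\xi$; hence $\pi_gs^T_g-\bar{\bfs}^T_g$ always sits in $\calh^\xi_0$, and the content of (4) is exactly that this trace-free remainder is the full deviation $s^T_g-\bar{\bfs}^T_g$, which forces $\pi_gs^T_g=s^T_g$.

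The heart of the proof is $(1)\Leftrightarrow(2)$. By definition, $\cals$ is extremal iff $\partial^\#s^T_g$ is transversely holomorphic, equivalently iff the vector field $\Phi\circ\grad_T s^T_g$ is transversely Killing and preserves $J$ on $\cald$. Combined with Lemma \ref{infcont}, this says exactly that the contact vector field $X_{s^T_g}=\Phi\circ\grad_Ts^T_g+s^T_g\,\xi$ lies in $\ga\gu\gt(\cals)$. Now $s^T_g$ is invariant under the maximal torus $\bbt^k\subset\gA\gu\gt(\cals)$ because the torus acts by Sasaki isometries, and this invariance implies $X_{s^T_g}$ commutes with $\gt$. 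Since a maximal torus in a compact Lie group is self-centralizing, $X_{s^T_g}\in\gt$, i.e.\ $s^T_g=\eta(X_{s^T_g})\in\calh^\xi$. Conversely, if $s^T_g\in\calh^\xi$, then $s^T_g=\eta(\grz)$ for some $\grz\in\gt\subset\ga\gu\gt(\cals)$, and Lemma \ref{infcont} identifies $\grz$ with $\Phi\circ\grad_Ts^T_g+s^T_g\,\xi$, so $\Phi\circ\grad_Ts^T_g$ is transversely holomorphic and $\cals$ is extremal.

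For the final clause, Lemma \ref{Futchi} gives $\chi_\xi=\Phi\circ\grad_T\,\pi_gs^T_g+(\pi_gs^T_g-\bar{\bfs}^T_g)\xi$. The Reeb component $(\pi_gs^T_g-\bar{\bfs}^T_g)\xi$ vanishes iff $\pi_gs^T_g$ equals the constant $\bar{\bfs}^T_g$, in which case the transverse gradient $\grad_T\pi_gs^T_g$ vanishes automatically; this yields the biconditional independently of whether $\cals$ is extremal. The principal obstacle I anticipate is the maximal-torus pinning in the forward direction of $(1)\Leftrightarrow(2)$: one must carefully argue that the Killing potential $s^T_g$ produced by extremality is associated not merely to some element of $\ga\gu\gt(\cals)$, but to an element of the specific torus algebra $\gt$ defining $\calh^\xi$. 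This relies on $\bbt^k$-invariance of $s^T_g$ together with the self-centralizing property of maximal tori; once that is in place, the remaining equivalences are essentially bookkeeping about the orthogonal splitting $\calh^\xi=\bbr\oplus\calh^\xi_0$.
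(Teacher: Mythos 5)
The paper offers no proof of this lemma at all (it is introduced with ``It is easy to obtain\dots''), so there is no argument of the authors to compare yours against; judged on its own, your proof is essentially correct, and you have correctly isolated the one non-formal point: passing from ``$s^T_g$ is a Killing potential for \emph{some} element of $\ga\gu\gt(\cals)$'' to ``$s^T_g\in\calh^\xi$'', i.e.\ membership in the torus algebra $\gt$ rather than merely in $\ga\gu\gt(\cals)$. Your resolution is the right one: $s^T_g$ is $\bbt^k$-invariant because the torus preserves $g^T$, so under the Lie algebra isomorphism $\gc\go\gn(\eta)\cong C^\infty(M)^\xi$ of Section \ref{KPsec} the field $X^{s^T_g}=\Phi\circ\grad_T s^T_g+s^T_g\xi$ centralizes $\gt$, and the centralizer of a maximal torus algebra in the Lie algebra of a compact group is that algebra itself. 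Together with Lemma \ref{infcont} and the standard Calabi-type fact (from \cite{BGS06}) that extremality is equivalent to $X^{s^T_g}\in\ga\gu\gt(\cals)$, this gives $(1)\Leftrightarrow(2)$; the equivalence $(2)\Leftrightarrow(3)$ and the ``moreover'' clause (read off from Lemma \ref{Futchi} by applying $\eta$, which annihilates the $\Phi\circ\grad_T$ term) are as routine as you say.

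The one point you should state bluntly rather than smooth over is item (4). As you yourself compute, $\int_M\pi_gs^T_g\,dv_g=\int_Ms^T_g\,dv_g$ because $1\in\calh^\xi$, so $\pi_gs^T_g-\bar{\bfs}^T_g$ lies in $\calh^\xi_0$ for \emph{every} Sasakian structure; condition (4) as printed is therefore vacuous and cannot be equivalent to extremality. Your sentence ``the content of (4) is exactly that this trace-free remainder is the full deviation'' is not a proof of $(4)\Rightarrow(3)$ but a (correct) diagnosis that the statement needs amending: (4) should read $s^T_g-\bar{\bfs}^T_g\in\calh^\xi_0$, which is then equivalent to (2) simply because the constants lie in $\calh^\xi$. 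With that emendation made explicit, your proof is complete.
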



As a corollary of Lemma \ref{Futchi} we have

\begin{proposition}\label{inprodisotopy}
On the Lie algebra $\gt$, the inner product $\langle\cdot,\cdot\rangle_\xi$ depends only on the isotopy class $(\calS,\bar{J})$.
\end{proposition}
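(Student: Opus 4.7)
The plan is to show that along a straight-line path $\eta_t = \eta + t\,d^c_B\varphi$ connecting two Sasakian structures of $\calS(\xi,\bar J)$, the quantity
\[
I(t) \;:=\; \int_M \eta_t(\zeta)\,\eta_t(\zeta')\,\eta_t\wedge(d\eta_t)^n
\]
is constant; up to a factor of $1/n!$ this is $\langle\zeta,\zeta'\rangle_\xi$ evaluated at $\cals_t$. Write $u := d^c_B\varphi$, $\omega_t := d\eta_t = \omega + t\,du$, $f_t := \eta_t(\zeta)$, $f'_t := \eta_t(\zeta')$, and $u_\zeta := \iota_\zeta u$; since $[\zeta,\xi]=0$ the functions $f_t$ and $u_\zeta$ are basic, and likewise for the primed versions. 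Differentiation in $t$ yields
\[
\dot I \;=\; \int_M (u_\zeta f'_t+f_t u_{\zeta'})\,\eta_t\wedge\omega_t^n \;+\; \int_M f_tf'_t\,u\wedge\omega_t^n \;+\; n\int_M f_tf'_t\,\eta_t\wedge du\wedge\omega_t^{n-1}.
\]
The middle integrand vanishes for dimensional reasons --- $u$ has transverse bidegree $(1,0)+(0,1)$ and $\omega_t^n$ has transverse bidegree $(n,n)$, so $u\wedge\omega_t^n$ lives in vanishing transverse bidegree $(n+1,n)+(n,n+1)$ --- and I will use this fact repeatedly.

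The heart of the argument is the identity
\[
n\, u\wedge\eta_t\wedge df_t\wedge\omega_t^{n-1} \;=\; u_\zeta\,\eta_t\wedge\omega_t^n,
\]
obtained from $\iota_\zeta(u\wedge\eta_t\wedge\omega_t^n)=0$ (the form has degree $2n+2>\dim M$) by expanding via Leibniz, using the dimensional vanishing $u\wedge\omega_t^n=0$, and invoking the Killing relation $\iota_\zeta\omega_t=-df_t$. The analogous identity holds for $\zeta'$. Applying Stokes to $d(f_tf'_t\,u\wedge\eta_t\wedge\omega_t^{n-1})$, together with $d(u\wedge\eta_t\wedge\omega_t^{n-1})=du\wedge\eta_t\wedge\omega_t^{n-1}$ (again using $u\wedge\omega_t^n=0$), rewrites the third term of $\dot I$ as $-n\int_M(f'_t\,df_t+f_t\,df'_t)\wedge u\wedge\eta_t\wedge\omega_t^{n-1}$; substituting the key identity this becomes $-\int_M(u_\zeta f'_t+f_t u_{\zeta'})\eta_t\wedge\omega_t^n$, which exactly cancels the first term. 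Hence $\dot I\equiv 0$, and integration gives $I(0)=I(1)$.

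The main obstacle is that the Killing relation $\iota_\zeta\omega_t=-df_t$ is equivalent to $\mathcal{L}_\zeta\eta_t=0$, which is automatic only when $\varphi$ is $\bbt$-invariant (in general $\mathcal{L}_\zeta\eta_t=t\,d^c_B(\zeta\varphi)\ne 0$). To handle an arbitrary $\varphi\in\calS(\xi,\bar J)$ I would first observe the $\bbt$-equivariance $\langle\zeta,\zeta'\rangle_{\sigma^*\cals}=\langle\zeta,\zeta'\rangle_\cals$ for $\sigma\in\bbt$, which follows from $\sigma_*\zeta=\zeta$ (abelianness of $\bbt$) and diffeomorphism invariance of the integral defining $\langle\cdot,\cdot\rangle_\xi$. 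This reduces the problem to the $\bbt$-invariant locus $\calS(\xi,\bar J)^\bbt\subset\calS(\xi,\bar J)$, on which the computation above applies. Conceptually, the whole statement is the transverse Kähler Futaki-Mabuchi theorem --- the same ingredient used to prove Lemma \ref{Futchi} --- applied to the fixed polarization $[d\eta]_B$ on the transverse Kähler metric $g^T$.
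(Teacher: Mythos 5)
Your core computation is correct and is in substance the route the paper itself (implicitly) takes: the paper offers no proof beyond declaring the proposition a corollary of Lemma \ref{Futchi}, i.e.\ it silently invokes the transverse version of the Futaki--Mabuchi invariance theorem, and what you have written out is precisely that computation. The differentiation of $I(t)$, the vanishing of $u\wedge\omega_t^n$ for transverse-degree reasons, the contraction identity $n\,u\wedge\eta_t\wedge df_t\wedge\omega_t^{n-1}=u_\zeta\,\eta_t\wedge\omega_t^n$ obtained by expanding $\iota_\zeta$ of a form of degree $2n+2$, and the integration by parts that cancels the first term of $\dot I$ all check out, signs included; the linear path stays in $\calS(\xi,\bar J)$ by convexity of the positivity condition.

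The weak point is the final reduction. The $\bbt$-equivariance $\langle\zeta,\zeta'\rangle_{\sigma^*\cals}=\langle\zeta,\zeta'\rangle_{\cals}$ only shows that the inner product is constant on $\bbt$-orbits in $\calS(\xi,\bar J)$; a non-invariant potential $\varphi$ is \emph{not} in the $\bbt$-orbit of a $\bbt$-invariant one (averaging over $\bbt$ produces an invariant potential lying outside the orbit), so this does not reduce the general case to the invariant locus. And for non-invariant $\varphi$ the difficulty is real: $\zeta$ no longer preserves $\eta_t$, so $\iota_\zeta\omega_t\neq -df_t$ and $\eta_t(\zeta)$ is no longer the Hamiltonian of $\zeta$ for $\omega_t$; to run the Futaki--Mabuchi argument in that generality one must replace $\eta_t(\zeta)$ by the intrinsically defined (in general complex) holomorphy potential of $\zeta^{1,0}$ and redo the computation with its variation $\zeta^{1,0}\varphi$. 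In practice this costs little --- every structure the paper actually uses lies in the $\bbt$-invariant part of $\calS(\xi,\bar J)$, which is the standard setting for statements made relative to a fixed torus --- but as written your proof establishes the proposition only on that invariant locus, so you should either restrict the claim accordingly or carry out the genuinely Futaki--Mabuchi version of the computation for non-invariant representatives.
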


In \cite{FuMa02} Futaki and Mabuchi generalized their bilinear form to a multilinear form. Accordingly, we can do the same, although we make no use of it. For each $\xi\in\gt^+(\cald,J)$ we define the symmetric multilinear form $\varPhi_\xi^l:{\rm sym}^l(\gt)\longrightarrow \bbr$ by
\begin{equation}\label{FMmulti}
\varPhi_\xi^l(\grz_1,\ldots,\grz_l)=\int_M\eta(\grz_1)\cdots \eta(\grz_l)dv_g.
\end{equation}
As in \cite{FuMa02} we have 

\begin{proposition}\label{FMmultiinv}
$\varPhi_\xi^k$ only depends on the isotopy class $\calS(\xi,\bar{J})$.
\end{proposition}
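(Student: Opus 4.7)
The plan is to adapt the K\"ahler argument of Futaki--Mabuchi \cite{FuMa02} to the transverse K\"ahler setting, in direct analogy with the bilinear case Proposition \ref{inprodisotopy}. Fix $\xi\in\gt^+(\cald,J)$ and let $\cals_t\in\calS(\xi,\bar J)$ be a smooth path. By \eqref{Sasspace}, such a path has the form $\eta_t=\eta_0+d^c_B\varphi_t$ for some $\varphi_t\in C^\infty_B(M)$, and setting $\psi_t:=\dot\varphi_t$ we have $\dot\eta_t=d^c_B\psi_t$ and $\frac{d}{dt}d\eta_t=dd^c_B\psi_t$. Since $\calS(\xi,\bar J)$ is path-connected, it suffices to show that $\frac{d}{dt}\varPhi^l_{\xi,t}(\grz_1,\ldots,\grz_l)=0$ along any such path for any $\grz_1,\ldots,\grz_l\in\gt$.

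First I would record the variations. Because $\xi$ and the $\grz_i$ remain fixed while $[\xi,\grz_i]=0$, contracting with $\grz_i$ gives $\dot\eta_t(\grz_i)=(d^c_B\psi_t)(\grz_i)$, while the Sasaki volume form $dv_{g_t}=\frac{1}{2^n n!}\eta_t\wedge(d\eta_t)^n$ satisfies
\begin{equation*}
\frac{d}{dt}dv_{g_t}=\frac{1}{2^n n!}\bigl(d^c_B\psi_t\wedge(d\eta_t)^n+n\,\eta_t\wedge dd^c_B\psi_t\wedge(d\eta_t)^{n-1}\bigr).
\end{equation*}
Differentiating $\varPhi^l_{\xi,t}$ and exploiting symmetry in the $\grz_i$ then reduces the proposition to the single identity
\begin{equation*}
\sum_{i=1}^l\int_M(d^c_B\psi_t)(\grz_i)\prod_{j\neq i}\eta_t(\grz_j)\,dv_{g_t}+\int_M\prod_{i=1}^l\eta_t(\grz_i)\,\frac{d}{dt}dv_{g_t}=0.
\end{equation*}

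Second, I would apply iterated integration by parts in the transverse K\"ahler structure. The essential inputs are: (i) Lemma \ref{infcont}, which encodes that each $\eta_t(\grz_i)$ is the Killing (hence transverse holomorphy) potential generating $\grz_i$; and (ii) the Sasaki Stokes identity $\int_M d\alpha\wedge\eta_t\wedge(d\eta_t)^{n-1}=\int_M\alpha\wedge(d\eta_t)^n$ for basic one-forms $\alpha$, obtained by integrating $d(\alpha\wedge\eta_t\wedge(d\eta_t)^{n-1})=0$. Applied with substitutions of the form $\alpha=\psi_t\prod_i\eta_t(\grz_i)\,d^c_B(\cdot)$ and expanded via the Leibniz rule, these combine so that the $dd^c_B\psi_t$ contribution matches minus the sum over $i$ of the $(d^c_B\psi_t)(\grz_i)$ terms, yielding the desired cancellation.

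The main obstacle is the bookkeeping: relative to the K\"ahler case on a $2n$-manifold, the Sasaki volume form carries an extra non-basic factor $\eta_t$, so Stokes' theorem must be applied carefully and the combinatorial factors arising from integrating by parts $l$ times (once for each $\eta_t(\grz_i)$) must match. A conceptually cleaner alternative would identify $\varPhi^l_\xi$ with coefficients of the equivariant pushforward of $(d\eta_t+\mu_t)^{n+l}$ in a transverse basic equivariant cohomology and invoke a Duistermaat--Heckman-type rigidity; but this requires developing basic equivariant cohomology for the Reeb foliation $\calf_\xi$, which the direct variational approach avoids.
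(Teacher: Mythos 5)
Your argument is correct and is exactly the adaptation of the Futaki--Mabuchi variational argument that the paper intends: the paper offers no proof beyond the citation ``As in \cite{FuMa02}'', and your reduction to a single integration-by-parts identity (using $\grz_i|_\cald=\Phi\circ\grad_T\eta_t(\grz_i)$ from Lemma \ref{infcont} and transverse Stokes) is the standard way to carry it out. Two cosmetic points: the term $d^c_B\psi_t\wedge(d\eta_t)^n$ in your volume-form variation vanishes identically, being a basic $(2n+1)$-form, and the Stokes input is that $\int_M d\bigl(\alpha\wedge\eta_t\wedge(d\eta_t)^{n-1}\bigr)=0$ rather than that the integrand itself is closed.
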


\subsection{The Einstein-Hilbert Functional}
We now consider the Einstein-Hilbert functional studied in \cite{Leg10,BHLT15,BHL17}
\begin{equation}\label{HE}
\bfH_\xi=\frac{\bfS_\xi^{n+1}}{\bfV_\xi^n}
\end{equation}
as a function on the Sasaki cone $\gt^+$. Since both $\bfV_\xi$ and $\bfS_\xi$ are independent of the choice of Sasakian structure in $\calS(\xi,\bar{J})$, the Einstein-Hilbert functional $\bfH_\xi$ only depends on the isotopy class of contact structure, and is, moreover, invariant under transverse scaling. We are interested in the set $\crit~\bfH\subset \gt^+$ of critical points of $\bfH$. Since $\bfH$ is invariant under transverse scaling we can restrict $\crit~\bfH$ to an appropriate slice $\grS\subset \gt^+$ if desired. It is easy to check the following behavior under transverse scaling.
\begin{lemma}\label{transscalelem}
The following relations hold under the transverse scaling operation $\xi\mapsto a^{-1}\xi$:
\begin{enumerate}
\item $s^T_{a^{-1}\xi}=a^{-1}s^T_\xi$;
\item $\bar{\bfs}^T_{a^{-1}\xi}=a^{-1}\bar{\bfs}^T_\xi$;
\item $\bfS_{a^{-1}\xi}=a^n\bfS_\xi$;
\item $\bfV_{a^{-1}\xi}=a^{n+1}\bfV_\xi$;
\item  $\bfH_{a^{-1}\xi}=\bfH_\xi$
\item $\bfF_{a^{-1}\xi}=a^{n+1}\bfF_\xi$;
\item $\chi_{a^{-1}\xi}=a^{-2}\chi_\xi$;
\item $\langle\grz,\grz'\rangle_{a^{-1}\xi}=a^{n+3}\langle\grz,\grz'\rangle_\xi$.
\end{enumerate}
\end{lemma}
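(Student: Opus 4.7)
My plan is to reduce everything to three basic scaling rules: $\eta \mapsto a\eta$, $d\eta \mapsto a\,d\eta$, and the transverse metric $g^T \mapsto ag^T$ (which follows from $g^T = g - \eta\otimes\eta$ together with the formula for $g_a$ in \eqref{transscale}). These give at once that the Sasaki volume form $\frac{1}{n!}\eta\wedge(d\eta)^n$ scales by $a^{n+1}$, which is (4). For (1), I would invoke the standard fact that multiplying a K\"ahler metric on the local leaf space by a positive constant $a$ multiplies its scalar curvature by $a^{-1}$, applied to the transverse K\"ahler metric. Then (3) follows from integrating $s^T_{a^{-1}\xi} = a^{-1}s^T_\xi$ against the rescaled volume form, (2) is the quotient $\bfS/\bfV$, and (5) is the dimensional verification that the powers of $a$ in $\bfH = \bfS^{n+1}/\bfV^n$ cancel. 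For (8), I would just substitute $\eta(\grz) \mapsto a\eta(\grz)$ into \eqref{FMinprod}, which combined with the $a^{n+1}$ from the volume form produces the factor $a^{n+3}$.

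The one point that requires genuine argument is (6), which is also the key to (7). The plan here is to show that the basic transverse Ricci potential $\psi_g$ is invariant under transverse scaling. Since $\rho^T = -i\partial\bar\partial\log\det g^T$ and $\log\det(ag^T)$ differs from $\log\det g^T$ by an additive constant, the transverse Ricci form $\rho^T$ is unchanged; and because harmonicity with respect to the transverse Hodge Laplacian is preserved under a constant rescaling of the metric, the harmonic representative $\rho^T_h$ is also unchanged. Hence $\psi_g$ is determined up to an additive constant, which plays no role inside the integral $\int X\psi_g\,dv_g$ since $X$ acts as a derivation. Only the volume form contributes, giving $\bfF_{a^{-1}\xi} = a^{n+1}\bfF_\xi$.

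Once this is in place, (7) is forced by combining (6) and (8) through Definition \ref{FMvf}: comparing
\begin{equation*}
a^{n+1}\langle\chi_\xi,\grz\rangle_\xi \;=\; \bfF_{a^{-1}\xi}\circ\Phi(\grz) \;=\; \langle\chi_{a^{-1}\xi},\grz\rangle_{a^{-1}\xi} \;=\; a^{n+3}\langle\chi_{a^{-1}\xi},\grz\rangle_\xi
\end{equation*}
for every $\grz \in \gt$ and using nondegeneracy of $\langle\cdot,\cdot\rangle_\xi$ on $\gt_0$ forces $\chi_{a^{-1}\xi} = a^{-2}\chi_\xi$. The main obstacle I anticipate is the scaling invariance of $\psi_g$ in (6); once that is settled, the remaining seven identities are pure bookkeeping on powers of $a$.
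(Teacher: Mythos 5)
Your verification is correct: the paper itself offers no proof (it simply asserts the relations are "easy to check"), and your reduction to the three basic scalings $\eta\mapsto a\eta$, $d\eta\mapsto a\,d\eta$, $g^T\mapsto ag^T$ yields exactly the stated exponents. The one genuinely nontrivial point, item (6), is handled properly — $\rho^T$ and its harmonic part are unchanged under a constant rescaling, so $\psi_g$ shifts only by an additive constant that is killed by the derivation $X$ — and (7) then follows from (6) and (8) by nondegeneracy of the Futaki--Mabuchi pairing, as you say.
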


By Lemma \ref{transscalelem} the zeroes and critical points of $\bfV_\xi,\bfS_\xi,\bfF_\xi$ come in rays. 
We let $Z^+$ denote the zero set in $\gt^+(\cald,J)$ of the Sasaki-Futaki invariant $\bfF$. Likewise we let $Z\subset \gR(\cald,J)$ denote the zero set of rays of the Sasaki-Futaki invariant $\bfF$. We denote by $\gr_\xi$ the ray through $\xi\in\gt^+(\cald,J)$. 
From Definition \ref{FMvf} and Lemma \ref{Futchi} we have
\begin{proposition}\label{Z+prop}
A Reeb vector field $\xi\in \gt^+(\cald,J)$ lies in $Z^+$ if and only if $\pi_gs^T_g=\bar{\bfs}^T_g$. Moreover, $\xi\in Z^+$ is extremal if and only if it is cscS.
\end{proposition}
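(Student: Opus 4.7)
The plan is to assemble the proposition from the three ingredients already in place: the definition of $\chi_\xi$ as the dual of $\bfF_\xi \circ \Phi$, the explicit formula for $\chi_\xi$ in Lemma \ref{Futchi}, and the characterization of extremality in Lemma \ref{extremlem}.

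For the first assertion, I would start by unpacking what $\xi \in Z^+$ means: by the construction of $\bfF\colon \gt^+ \times \gt \to \bbr$ right before Definition \ref{FMvf}, the condition is precisely that the functional $\bfF_\xi \circ \Phi$ vanishes identically on $\gt$. By Definition \ref{FMvf} this functional equals $\langle \chi_\xi, \cdot \rangle_\xi$, and since \eqref{FMinprod} is positive definite on the finite-dimensional Abelian algebra $\gt$, vanishing of $\langle \chi_\xi, \grz\rangle_\xi$ for every $\grz \in \gt$ is equivalent to $\chi_\xi = 0$. At this point the desired conclusion $\pi_g s^T_g = \bar{\bfs}^T_g$ is exactly the final assertion of Lemma \ref{extremlem}, so the equivalence follows.

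Alternatively, and perhaps more transparently, I could give a direct derivation: feeding $\chi_\xi = \Phi \circ \grad_T~\pi_g s^T_g + (\pi_g s^T_g - \bar{\bfs}^T_g)\xi$ from Lemma \ref{Futchi} into $\eta(\chi_\xi)$ gives $\eta(\chi_\xi) = \pi_g s^T_g - \bar{\bfs}^T_g$, since $\eta(\Phi Y) = 0$ for any $Y$ and $\eta(\xi) = 1$. Because $\xi\mapsto \calh^\xi$ is an isomorphism, $\chi_\xi = 0$ iff $\eta(\chi_\xi) = 0$, so $\chi_\xi = 0 \Leftrightarrow \pi_g s^T_g = \bar{\bfs}^T_g$. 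This shows $\xi \in Z^+ \Leftrightarrow \pi_g s^T_g = \bar{\bfs}^T_g$.

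For the second assertion, I would assume $\xi \in Z^+$ and use the equivalences in Lemma \ref{extremlem}. Extremality of $\cals$ is equivalent to $\pi_g s^T_g = s^T_g$ (item (3) of that lemma); combined with the just-established identity $\pi_g s^T_g = \bar{\bfs}^T_g$, this yields $s^T_g = \bar{\bfs}^T_g$, which is precisely the cscS condition. The reverse implication is immediate: a cscS structure has $s^T_g$ constant, so $\partial^\# s^T_g = 0$ is trivially transversely holomorphic, i.e., extremal.

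There is no real obstacle here; the proposition is essentially a bookkeeping consequence of the orthogonal decomposition \eqref{orthogsplitcalh}, the explicit formula in Lemma \ref{Futchi}, and the characterization of extremality in Lemma \ref{extremlem}. The only point requiring a moment of care is the non-degeneracy of $\langle\cdot,\cdot\rangle_\xi$ on $\gt$, which is clear since it is induced from the $L^2$ inner product via the injective map $\grz \mapsto \eta(\grz)$ (injectivity follows from the effectivity of the $\gt$-action and the positivity $\eta_o(\xi) > 0$ on $\gt^+$).
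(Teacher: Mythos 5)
Your proof is correct and follows exactly the route the paper intends: the paper states this proposition with no written proof beyond the citation ``From Definition \ref{FMvf} and Lemma \ref{Futchi} we have,'' and your argument simply fills in those details (duality of $\bfF_\xi\circ\Phi$ with $\chi_\xi$ via the positive-definite pairing, the identity $\eta(\chi_\xi)=\pi_gs^T_g-\bar{\bfs}^T_g$, and Lemma \ref{extremlem} for the extremal/cscS equivalence). No gaps.
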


We are interested in the underlying structure of $Z$ and $Z^+$.
\begin{proposition}\label{Zalgvar} 
$Z$ and $Z^+$ are real affine algebraic varieties, and $Z^+$ is the cone over $Z$. Hence, the number of connected components of $Z$ is finite, and $Z$ is a compact subset of $\gR(\cald,J)$.
\end{proposition}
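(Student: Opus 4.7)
The plan is to realize $Z^+$ as the common zero locus in $\gt^+(\cald,J)$ of finitely many polynomial equations on the ambient Lie algebra $\gt$, and then read off the remaining structural properties. I would fix any basis $X_1,\ldots,X_k$ of $\gt$ and define $f_i:\gt^+(\cald,J)\to\bbr$ by $f_i(\xi)=\bfF_\xi\circ\Phi(X_i)$. Since $\bfF_\xi\circ\Phi$ is linear on $\gt$, one has $\xi\in Z^+$ if and only if $f_1(\xi)=\cdots=f_k(\xi)=0$, so the task reduces to describing each $f_i$ algebraically.

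The key input I would invoke is the equivariant localization (Duistermaat-Heckman type) formula for the torus action, which expresses the integrals appearing in $\bfF_\xi\circ\Phi$ as rational functions of $\xi$. This was carried out for the volume $\bfV_\xi$ by Martelli, Sparks and Yau \cite{MaSpYau06} and extended to the total transverse scalar curvature $\bfS_\xi$ and to the Sasaki-Futaki invariant in \cite{BHLT15,BovCo16}. Each $f_i$ extends from $\gt^+(\cald,J)$ to the whole of $\gt$ as a rational function $P_i(\xi)/Q_i(\xi)$, where $P_i,Q_i$ are polynomials on $\gt$ and $Q_i$ is nowhere vanishing on $\gt^+(\cald,J)$. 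Consequently
$$Z^+ \;=\; \gt^+(\cald,J)\cap\{\xi\in\gt \,:\, P_1(\xi)=\cdots=P_k(\xi)=0\},$$
exhibiting $Z^+$ as the intersection of the open Sasaki cone with a real affine algebraic variety. By Lemma \ref{transscalelem}(6) each $f_i$ is homogeneous in $\xi$ under transverse scaling, so after clearing denominators the $P_i$ may be chosen homogeneous. This shows $Z^+$ is a cone and identifies $Z=Z^+/\bbr^+$ with its projectivization inside $\gR(\cald,J)$, cut out by the same $P_i$.

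The finiteness of the number of connected components of $Z$ is then immediate from the classical fact that any real semi-algebraic set has only finitely many connected components (cf.\ \cite{BoCoRo98}). For the compactness of $Z$ in $\gR(\cald,J)$ the argument is more delicate. I would rule out the existence of a sequence of rays in $Z$ approaching $\partial\gt^+(\cald,J)$ as follows: at such a boundary ray, $\eta_o(\xi)$ vanishes at some point of $M$, the volume $\bfV_\xi$ blows up, and the poles of the rational functions $f_i$ coming from the localization formulas sit precisely along the facial strata of $\overline{\gt^+(\cald,J)}$. This forces the numerators $P_i$ not to vanish simultaneously on such a limiting ray, preventing common zeros from accumulating on the boundary.

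The main obstacle is this last compactness step. The rational-function description of $\bfF_\xi$ is by now standard for quasi-regular $\xi$ and extends to general $\xi\in\gt^+(\cald,J)$ by continuity, so the algebraic-variety and cone structure of $Z^+$ amount essentially to bookkeeping once rationality is in hand; but obtaining a sufficiently precise grip on the boundary asymptotics of the $f_i$ to exclude accumulation of zeros at $\partial\gt^+(\cald,J)$ is the most technical point of the argument.
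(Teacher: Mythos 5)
Your reduction of $Z^+$ to the common zero locus of the finitely many functions $f_i(\xi)=\bfF_\xi\circ\Phi(X_i)$, and the derivation of the cone structure and the finiteness of components from rationality plus the finiteness of connected components of real algebraic sets, is in the same spirit as the paper. Note, however, that the paper does not quote rationality of $\bfF_\xi$ in $\xi$ directly from localization: it deduces it from the rationality of $\bfS_\xi$ and $\bfH_\xi$ (proved in \cite{BHL17}) via the identity $d\bfH_\xi(a)=n(n+1)\bar{\bfs}_\xi^n\,\bfF_\xi(\Phi(a))$ from \cite{BHLT15}, and this forces a case split: where $\bfS_\xi\neq 0$ the identity yields rationality of $\bfF$, while on the locus $\bfS_\xi=0$ one instead uses $d\bfS_\xi=n\bfF_\xi\circ\Phi$ to identify $Z$ there with the critical rays of the rational function $\bfS$. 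If the direct rationality of the Sasaki--Futaki invariant that you invoke is really available in the stated form, your route is a legitimate shortcut that avoids the case split; but you should verify that citation, since the paper goes out of its way not to rely on it.

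The genuine gap is the compactness step, which you yourself flag as incomplete. The argument you sketch is a non sequitur: the fact that the denominators $Q_i$ degenerate (the $f_i$ have poles) along the facial strata of $\overline{\gt^+(\cald,J)}$ says nothing about the numerators; the algebraic set $\{P_1=\cdots=P_k=0\}$ can perfectly well have a branch running into $\partial\gt^+(\cald,J)$, and nothing in your setup excludes a sequence of common zeros accumulating on a boundary ray. The paper obtains compactness with no boundary asymptotics at all: by \eqref{lem31} one has $Z^+\subset\crit~\bfH$, so $d\bfH$ vanishes along $Z^+$ and $\bfH$ is therefore constant on each of the finitely many connected components of $Z$ (which are path connected by piecewise smooth arcs, $Z$ being semi-algebraic); since $\bfH$ is a proper function on $\gR(\cald,J)$ by \cite{BHL17}, each component lies in a compact sublevel set, and $Z$, being closed, is compact. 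You should replace your boundary analysis with this (or an equivalent) properness argument.
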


\begin{proof}
From Lemma 3.1 of  \cite{BHLT15} we have
\begin{equation}\label{lem31}
d\bfH_\xi (a) = n(n+1)\bar{\bfs}_\xi^n \bfF_{\xi}(\Phi(a)). 
\end{equation}
We first consider the case $\bar{\bfs}_\xi^n\neq 0$ and let $\gr_{\xi_o}\in Z$ be its corresponding ray. Moreover, since $s^T_o\neq 0$ there is a neighborhood $U_o$ of $\xi_o$ such that  Equation \eqref{lem31} holds in $U_o$ with $\bfS_\xi\neq 0$ for all $\xi\in U_o$. But from \cite{BHL17} we know that both the total transverse scalar curvature $\bfS$ and the Einstein-Hilbert functional $\bfH$ are rational functions of $\xi$. It follows from \eqref{lem31} that the Sasaki-Futaki invariant $\bfF$ is a rational function of $\xi$ on $U_o$. So its zero set is a real algebraic variety \cite{BoCoRo98}.
Now consider the case $\bfS_{\xi_o}=0$. The second statement of Lemma 3.1 in \cite{BHLT15} says that when $\bfS_{\xi}=0$ the following holds 
$$d\bfS_\xi=n\bfF_\xi\circ\Phi.$$
Thus, $\gr_{\xi_o}\in Z$ if and only if $\gr_{\xi_o}$ is a critical ray of $\bfS_{\xi_o}$ which is a rational function of $\xi_o$. So the result follows as above. The compactness of $Z$ follows since $\bfH$ is a proper function \cite{BHL17} .
\end{proof}

\begin{remark}\label{sT0rem}
Note that if $s^T_{\xi_o}=0$ then it has $s^T_{a^{-1}\xi_o}=0$ along the entire ray $\gr_{\xi_o}$ and it is the unique ray in $\gt^+(\cald,J)$ with this property by \cite{BHL17}. 
\end{remark}
Since real algebraic varieties are CW complexes Proposition \ref{Zalgvar} implies 
\begin{corollary}\label{Zlocpath}
$Z$ is locally path connected. 
\end{corollary}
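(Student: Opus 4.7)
The plan is to invoke directly the structural result cited in the lead-in sentence: every real affine algebraic variety (indeed every semi-algebraic set) admits a finite semi-algebraic triangulation, and is therefore homeomorphic to a finite simplicial complex; in particular it carries a CW structure. This is a classical theorem of \L{}ojasiewicz, which can be found in Bochnak-Coste-Roy \cite{BoCoRo98} — the same reference the authors use in Proposition \ref{Zalgvar}.

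Given this, the argument is essentially one line. By Proposition \ref{Zalgvar} the set $Z\subset \gR(\cald,J)$ is a real affine algebraic variety, and I would apply the triangulation theorem to endow $Z$ with the structure of a finite CW complex. It then remains to cite (or prove in a single paragraph) the standard topological fact that every CW complex is locally path connected: at any point $x\in Z$ the open stars of the simplices containing $x$ form a neighborhood basis, and each such open star is (star-)contractible, hence path connected. Combining these two observations yields the claim.

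The main obstacle, such as it is, is purely bibliographic rather than mathematical: one must be sure to reference a version of the triangulation theorem that applies to real algebraic sets as topological spaces (not to smooth or complex varieties, which would be a stronger hypothesis than available here, since $Z$ may well be singular and reducible). Once that reference is in place, no further computation or case analysis is needed, and the corollary follows immediately.
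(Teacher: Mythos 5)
Your argument is correct and is essentially identical to the paper's: the authors likewise deduce the corollary from Proposition \ref{Zalgvar} via the fact that real algebraic varieties are CW complexes (hence locally path connected). Your extra care about citing the triangulation theorem for possibly singular real algebraic sets is a reasonable refinement, but no new idea is involved.
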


But we know that a ray $\gr_\xi\in Z$ need not be extremal. Identifying the tangent space of $\gt^+$ at $\xi\in\gt^+$ with the Lie algebra $\gt$ itself, we show here that the gradient vector field $\grad~\bfH$ viewed as an element of $\gt$ is proportional to the Futaki-Mabuchi vector field $\chi_\xi$. Specifically we have the following corollary of Lemma 3.1 in \cite{BHLT15}:
\begin{theorem}\label{gradHthm}
For each $\xi\in\gt^+$ the vector field $\grad~\bfH_\xi$ satisfies
\begin{enumerate}
\item $\grad~\bfH_\xi =n(n+1)\bar{\bfs}_\xi^n\chi_\xi,$
\item $\langle\grad~\bfH_\xi,\xi\rangle_\xi=0$;
\item $\xi$ is a critical point of $\bfH_\xi$ if and only if $\grad~\bfH_\xi=0$;
\item  if $\bfS_\xi\neq 0$ then $\xi$ is a critical point of $\bfH_\xi$ if and only if $\pi_gs^T_g=\bar{\bfs}^T_g$;
\item if $\bfS_\xi\neq 0$ then $\xi$ is a critical point of $\bfH_\xi$ if and only if $\chi_\xi=0$. Moreover, in this case $\chi_\xi$ is a rational function of $\xi$.
\end{enumerate}
\end{theorem}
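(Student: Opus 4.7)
The plan is to combine the first-order formula for $d\bfH_\xi$ provided by Lemma 3.1 of \cite{BHLT15}, namely Equation \eqref{lem31}, with Definition \ref{FMvf} of $\chi_\xi$, and then read off each of the five conclusions almost directly.

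First I would establish (1). The gradient $\grad~\bfH_\xi\in\gt$ is characterized by $d\bfH_\xi(a)=\langle\grad~\bfH_\xi,a\rangle_\xi$ for all $a\in\gt$. Substituting \eqref{lem31} on the left and the identity $\bfF_\xi\circ\Phi(a)=\langle\chi_\xi,a\rangle_\xi$ from Definition \ref{FMvf} on the right gives
\[
\langle\grad~\bfH_\xi-n(n+1)\bar{\bfs}_\xi^n\chi_\xi,\,a\rangle_\xi=0 \qquad \text{for every } a\in\gt,
\]
and non-degeneracy of $\langle\cdot,\cdot\rangle_\xi$ yields the desired identity.

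Item (2) is then immediate from (1) together with $\chi_\xi\in\gt_0$, which was already recorded in \eqref{chinorm}. Item (3) is a tautology from non-degeneracy of the inner product. For (4) and (5) the hypothesis $\bfS_\xi\neq 0$ forces $\bar{\bfs}_\xi\neq 0$, so by (1) a critical point is equivalent to $\chi_\xi=0$; the last assertion of Lemma \ref{extremlem} then gives $\chi_\xi=0$ if and only if $\pi_gs^T_g=\bar{\bfs}^T_g$.

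The substantive remaining step is the rationality claim in (5). Here I would fix any basis $\{e_1,\ldots,e_k\}$ of $\gt$ and solve (1) componentwise: the components $\chi_\xi^i$ are obtained by applying the inverse of the Gram matrix with entries $\langle e_i,e_j\rangle_\xi=\varPhi_\xi^2(e_i,e_j)$ to the covector $(d\bfH_\xi(e_j))$ and dividing by $n(n+1)\bar{\bfs}_\xi^n$. Rationality of $\bfV_\xi$, $\bfS_\xi$, and hence of $\bar{\bfs}_\xi$ and $\bfH_\xi$ in $\xi$ is established in \cite{BHL17}, and the same localization/pushforward argument gives rationality of the polynomial moments $\varPhi_\xi^2$ of \eqref{FMmulti}. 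Combining these exhibits $\chi_\xi$ as a rational function of $\xi$ on the open set $\{\bfS_\xi\neq 0\}$. The main obstacle lies precisely here, since one must keep careful track of how both $d\bfH_\xi$ and the $\xi$-dependent inner product used to raise it to a gradient vary, and verify that inverting the Gram matrix preserves rationality; the remaining items are essentially direct consequences of the two cited ingredients.
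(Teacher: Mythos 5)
Your proof is correct and follows essentially the same route as the paper: dualize Equation \eqref{lem31} with respect to $\langle\cdot,\cdot\rangle_\xi$ using Definition \ref{FMvf} to get (1), then read off (2)--(5) from $\chi_\xi\in\gt_0$, non-degeneracy, and the equivalence $\chi_\xi=0\Leftrightarrow\pi_gs^T_g=\bar{\bfs}^T_g$ of Lemma \ref{extremlem}. Your Gram-matrix argument for the rationality of $\chi_\xi$ simply spells out what the paper delegates to ``the results of \cite{BHL17}.''
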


\begin{proof}
Taking the dual of equation \eqref{lem31} with respect to the Futaki-Mabuchi inner product $\langle\cdot,\cdot\rangle_\xi$ on $\gt$ and using Definition \ref{FMvf} gives (1) from which (2) follows, and (3) follows by duality. The last two statements follows from (1) and the results of \cite{BHL17}.
\end{proof}

\subsection{The Sasaki Energy Functional}
In \cite{BGS07b} a functional, the $L^2$ norm of the projection $\pi_gs_g$, which provides a lower bound to the Calabi functional \eqref{calabifunct}, namely
\begin{equation}\label{strongextrem}
\int_Ms^2_gdv_\xi\geq \int_M (\pi_gs_g)^2dv_g=:\cals\cale_2(\xi)
\end{equation}
was studied. However, this functional does not behave well under transverse scaling which is desirable when varying in the Sasaki cone. Thus, we consider a related functional
\begin{equation}\label{calscaleT2}
\cals\cale^T_2(g)=\int_M (\pi_gs^T_g)^2dv_g
\end{equation}
which gives a lower bound to the transverse Calabi energy functional \eqref{calabifunct2}, namely
\begin{equation}\label{transcalabi}
\cale_2^T(g)=\int_M(s^T_g)^2dv_\xi\geq \int_M (\pi_gs^T_g)^2dv_g,
\end{equation}
as we vary through elements in the Sasaki cone $\gt^+(\cald,J)$ with a fixed volume. However, as with the Einstein-Hilbert functional it is convenient to normalize and consider
\begin{equation}\label{strongextr}
\cals\cale(\xi):=\cals\cale^T(\xi)=\frac{(\int_M (\pi_gs^T_g)^2dv_g)^{n+1}}{(\int_Mdv_g)^{n-1}}
\end{equation}
which is homogeneous with respect to transverse scaling, that is, $\cals\cale(a^{-1}\xi)=\cals\cale(\xi)$. So the critical points of $\cals\cale$ are manifestly rays $\gr_\xi\in \gR(\cald,J)$. Following \cite{Sim00} for the K\"ahler case, we call $\cals\cale(\xi)$ the {\it Sasaki Energy} functional.

From the fact that $\chi_\xi\in\calh^\xi_0$ we have the equality 
\begin{equation}\label{piseqn}
\int_M\pi_gs^T_gdv_g=\int_Ms^T_gdv_g=\bfS_\xi
\end{equation}
which suggests a strong relation between the Einstein-Hilbert functional \eqref{HE} and the Sasaki energy functional \eqref{strongextr}.

We now consider the variation of the functionals \eqref{strongextr} and \eqref{calscaleT2}.  Generally we could consider a path of contact forms 
\begin{equation}\label{genpath}
\eta_t = \frac{1}{\eta(\xi_t)}\eta + d^c \varphi_t \, 
\end{equation}
where $\varphi_t$ is a $\xi_t$-basic function; however, it is enough to take the variation to lie within a fixed contact CR structure $(\cald,J)$ by choosing $\varphi_t$ to be a constant. We are mainly interested in the scale invariant functional $\cals\cale$, although it is easier to work with the functional $\cals\cale^T_2$. However, in order to obtain rays as critical points of $\cals\cale^T_2$ we need to choose a slice that intersects each ray once. However, from the definition of $\cals\cale$, Equation \eqref{strongextr}, we have
\begin{equation}\label{calscale2eqn}
\frac{d\cals\cale(\xi_t)}{dt}=(n+1)\frac{\cals\cale_2^T(\xi_t)^n}{\bfV (\xi_t)^{n-1}} \frac{d\cals\cale_2^T(\xi_t)}{dt} -(n-1)\frac{\cals\cale_2^T(\xi_t)^{n+1}}{\bfV (\xi_t)^{n}}\frac{d\bfV (\xi_t)}{dt}.
\end{equation}
Thus, if we choose variations of $\cals\cale^T_2$ with fixed volume, the critical points of $\cals\cale$ and $\cals\cale^T_2$ are essentially the same. Indeed, we have

\begin{lemma}\label{fixvollem}
Under variations of fixed volume a critical point of $\cals\cale^T_2$ is a critical point of $\cals\cale$. Conversely, if $s^T_g$ is not identically zero, a critical point of $\cals\cale$ is a critical point of $\cals\cale^T_2$ under variations of fixed volume.
\end{lemma}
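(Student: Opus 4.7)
The proof will rest entirely on the identity \eqref{calscale2eqn} displayed just above the statement, combined with the scale-invariance of $\cals\cale$ recorded in Lemma \ref{transscalelem}(5). The underlying idea is that transverse scaling gives a free one-parameter family by which any variation may be reparametrized to have constant volume, and on a fixed-volume slice the formula \eqref{calscale2eqn} reduces to a single term.

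For the first direction, suppose $\xi$ is a critical point of $\cals\cale^T_2$ when restricted to the slice $\{\eta\in\gt^+\mid \bfV(\eta)=\bfV(\xi)\}$. Given an arbitrary smooth variation $\xi_t$ with $\xi_0=\xi$, I would set $a(t)=(\bfV(\xi_t)/\bfV(\xi))^{1/(n+1)}$ and $\tilde\xi_t=a(t)^{-1}\xi_t$. By Lemma \ref{transscalelem}(4) the path $\tilde\xi_t$ has $\bfV(\tilde\xi_t)\equiv\bfV(\xi)$, and by Lemma \ref{transscalelem}(5) we have $\cals\cale(\tilde\xi_t)=\cals\cale(\xi_t)$, so the derivatives of $\cals\cale$ along $\xi_t$ and along $\tilde\xi_t$ agree at $t=0$. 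Plugging $\tilde\xi_t$ into \eqref{calscale2eqn}, the second term is killed by $d\bfV/dt=0$ and the first by the hypothesis $d\cals\cale^T_2/dt|_{0}=0$ for fixed-volume variations. Hence $d\cals\cale(\xi_t)/dt|_{0}=0$ for every variation, i.e.\ $\xi$ is critical for $\cals\cale$.

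For the converse, let $\xi$ be a critical point of $\cals\cale$ and let $\xi_t$ be any variation with $d\bfV(\xi_t)/dt|_{0}=0$. At $t=0$, the second term of \eqref{calscale2eqn} drops out and we are left with
$$0=\frac{d\cals\cale(\xi_t)}{dt}\bigg|_{0}=(n+1)\frac{\cals\cale_2^T(\xi)^n}{\bfV(\xi)^{n-1}}\,\frac{d\cals\cale_2^T(\xi_t)}{dt}\bigg|_{0}.$$
Provided $\cals\cale_2^T(\xi)\neq 0$, this forces $d\cals\cale_2^T(\xi_t)/dt|_{0}=0$, as required.

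The only delicate point, and the place where I expect the real work to lie, is reading the hypothesis ``$s^T_g$ is not identically zero'' as the nondegeneracy statement $\cals\cale_2^T(\xi)\neq 0$, i.e.\ $\pi_g s^T_g\not\equiv 0$. If $\pi_gs^T_g$ were identically zero, then in particular its integral would vanish, and by \eqref{piseqn} this forces $\bfS_\xi=0$; more strongly, $s^T_g$ would be $L^2$-orthogonal to the whole algebra of Killing potentials $\calh^\xi$. Thus, in the regime $\bfS_\xi\neq 0$ the needed nondegeneracy is automatic, and in general it is exactly what the hypothesis is there to rule out. Once this is in hand, the two implications are immediate from \eqref{calscale2eqn} and the scale-invariance lemma.
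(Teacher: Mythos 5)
Your argument is correct and is essentially what the paper intends: Lemma \ref{fixvollem} is stated without proof as an immediate consequence of \eqref{calscale2eqn}, and your two implications are exactly the two readings of that identity on a fixed-volume slice, with the rescaling trick supplying the (correct) observation that criticality of the scale-invariant $\cals\cale$ need only be tested against fixed-volume variations. The one caveat you flag is genuine but is an imprecision of the statement rather than of your proof: the converse really requires $\cals\cale^T_2(\xi)=\int_M(\pi_gs^T_g)^2dv_g\neq 0$, i.e.\ $\pi_gs^T_g\not\equiv 0$, which is formally stronger than the stated hypothesis $s^T_g\not\equiv 0$ (it fails exactly when $s^T_g$ is $L^2$-orthogonal to all of $\calh^\xi$), and you have correctly identified this as the condition actually needed.
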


\begin{remark}\label{critptrem}
Note that 
\begin{equation}\label{fixvol}
\frac{d\bfV (\xi_t)}{dt}=-(n+1) \int_M\eta(\dot{\xi})dv_g,
\end{equation}
so the fixed volume constraint is realized by the equation
\begin{equation}\label{volfix}
\int_M\eta(\dot{\xi})dv_g= 0.
\end{equation}
\end{remark}

We shall often make use of the following
\begin{lemma}\label{projlem}
For any $f\in C^\infty(M)^\xi$, $\pi_gf$ is the unique element $A$ in $\calh^\xi$ such that 
$$\langle A,h\rangle= \langle f,h\rangle$$ 
for all $h\in \calh^\xi$.
\end{lemma}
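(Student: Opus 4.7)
My plan is to read the statement as the standard characterization of an orthogonal projection in a (pre-)Hilbert space, and verify the two properties that characterize such a projection: that $\pi_g f$ produces the same $L^2$ pairing as $f$ against every element of $\calh^\xi$, and that this property determines the element of $\calh^\xi$ uniquely.

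First I would recall that the projection $\pi_g : C^\infty(M)^\xi \to \calh^\xi$ introduced just before Lemma \ref{Futchi} is the \emph{orthogonal} projection with respect to the $L^2$ inner product \eqref{L2g} restricted to $\xi$--invariant functions. Since $\calh^\xi$ is a finite dimensional subspace of $C^\infty(M)^\xi$ (it is isomorphic to the finite dimensional Lie algebra $\gt$), the orthogonal projection is well defined and satisfies, by construction,
\begin{equation*}
\langle f - \pi_g f, h \rangle = 0 \quad \text{for every } h \in \calh^\xi.
\end{equation*}
This is exactly the asserted identity $\langle \pi_g f, h \rangle = \langle f, h \rangle$ for all $h \in \calh^\xi$, giving existence of the element $A := \pi_g f$.

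For uniqueness, suppose $A, A' \in \calh^\xi$ both satisfy $\langle A, h \rangle = \langle A', h \rangle = \langle f, h \rangle$ for every $h \in \calh^\xi$. Then $A - A' \in \calh^\xi$ and $\langle A - A', h \rangle = 0$ for all $h \in \calh^\xi$. Taking $h = A - A'$ itself is legal since $\calh^\xi$ is a linear subspace, so
\begin{equation*}
\|A - A'\|^2 = \langle A - A', A - A' \rangle = 0,
\end{equation*}
and positive definiteness of the $L^2$ inner product on smooth functions forces $A = A'$.

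I do not expect any genuine obstacle here: the only thing to be careful about is that the pairing used in the statement is the $L^2$ pairing \eqref{L2g} on functions (the case $p=0$ of the general pairing), and that $\pi_g$ is genuinely the $L^2$ orthogonal projection onto $\calh^\xi$ rather than some other linear projection. Both of these are clear from the earlier discussion, so the lemma reduces to the familiar finite dimensional orthogonal projection argument outlined above.
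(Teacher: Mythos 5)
Your proof is correct. The paper gives no proof of this lemma at all, treating it as the standard characterization of the $L^2$--orthogonal projection onto the finite dimensional subspace $\calh^\xi$; your argument (existence from the defining property of the orthogonal projection, uniqueness from positive definiteness of the $L^2$ inner product applied to $h=A-A'$) is exactly the standard argument the paper is implicitly invoking.
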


Next we give the Euler-Lagrange equations for both functional $\cals\cale^T_2$ and $\cals\cale$. 
\begin{theorem}\label{varthm1} 
For $t\in (-\epsilon,\epsilon)$ let $\xi_t$ be a $C^1$ path of Reeb vector fields compatible with a fixed CR structure $(\cald,J)$, then we have
\begin{equation}\label{variationthm}
\frac{d\cals\cale(\xi_t)}{dt}|_{t=0}= (n+1)\frac{\cals\cale^T_2(\xi)^n}{\bfV(\xi)^{n-1}} \int_MF(\xi)\eta(\dot{\xi})dv_g,
\end{equation}
where $F(\xi)$ is given by
$$F(\xi)=-2ns^T_g\pi_gs^T_g-2(2n+1)\Delta(\pi_gs^T_g) +(n+1)(\pi_gs^T_g)^2 +(n-1)\frac{\cals\cale^T_2(\xi)}{\bfV(\xi)}.$$
So the Euler-Lagrange equations of $\cals\cale$ are 
\begin{equation}\label{ELeqn}
\pi_g\Bigl(-2ns^T_g\pi_gs^T_g-2(2n+1)\Delta(\pi_gs^T_g) +(n+1)(\pi_gs^T_g)^2+(n-1)\frac{\cals\cale^T_2(\xi)}{\bfV(\xi)}\Bigr)=0.
\end{equation}
\end{theorem}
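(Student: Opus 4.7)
The strategy is to differentiate $\cals\cale$ directly along the path $\xi_t$ and use \eqref{calscale2eqn} to reduce everything to computing the time-derivatives $A'(0)$ and $B'(0)$, where $A(t):=\cals\cale^T_2(\xi_t)$ and $B(t):=\bfV(\xi_t)$. A short computation from $\eta_t=\eta/\eta(\xi_t)$ (Lemma \ref{Killpotl}(2)) gives $dv_{g_t}=\eta(\xi_t)^{-(n+1)}dv_g$, whence
$$\frac{d(dv_{g_t})}{dt}\Big|_{t=0}=-(n+1)\eta(\dot\xi)dv_g,\qquad B'(0)=-(n+1)\int_M\eta(\dot\xi)dv_g.$$
Plugging into \eqref{calscale2eqn} already accounts for the $(n-1)\cals\cale^T_2/\bfV$ summand of the claimed $F(\xi)$.

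For $A'(0)$, write $p_t:=\pi_{g_t}s^T_{g_t}=\eta_t(\grz_t)$ for a smooth path $\grz_t\in\gt$, so that $\dot p:=p'(0)=-\eta(\dot\xi)p+\eta(\dot\grz)$ with $p:=p_0$ and $\dot\grz:=\grz'(0)$. By Lemma \ref{projlem} we have the two equivalent expressions $A(t)=\int_M p_t\,s^T_{g_t}\,dv_{g_t}=\int_M p_t^2\,dv_{g_t}$. Differentiating both at $t=0$ and equating lets us eliminate the auxiliary quantity $\langle\dot\grz,\grz_0\rangle_\xi=\int_M p\,\eta(\dot\grz)\,dv_g$; after collecting terms,
$$A'(0)=2\int_M p\,(s^T)'(0)\,dv_g-(2n+4)\int_M s^T_g p\,\eta(\dot\xi)\,dv_g+(n+3)\int_M p^2\,\eta(\dot\xi)\,dv_g,$$
where $(s^T)'(0)$ denotes $\frac{d s^T_{g_t}}{dt}|_{t=0}$.

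The remaining ingredient is the identity
$$\int_M p\,(s^T)'(0)\,dv_g=\int_M\bigl(2s^T_g p-p^2-(2n+1)\Delta p\bigr)\eta(\dot\xi)\,dv_g, \qquad (\star)$$
where $\Delta$ is the basic Laplacian. I would establish $(\star)$ by first expanding the pointwise variation of the transverse scalar curvature along the Reeb path (using $d\eta_t|_\cald=\eta(\xi_t)^{-1}d\eta|_\cald$, so that the transverse Kähler metric is conformally rescaled by $\eta(\xi_t)^{-1}$), and then integrating by parts against $p$ to move the derivatives of $\eta(\dot\xi)$ onto the Killing potential, producing the $(2n+1)\Delta p$ contribution; the $2s^T_g p-p^2$ piece comes from the pointwise algebraic terms. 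A useful consistency check is the transverse scaling $\dot\xi=-\xi$, $\eta(\dot\xi)\equiv-1$, $(s^T)'(0)=-s^T_g$ (Lemma \ref{transscalelem}(1)): both sides of $(\star)$ reduce to $-\cals\cale^T_2(\xi)$, and the resulting $A'(0)=(n-1)\cals\cale^T_2(\xi)$ matches the homogeneity degree $n-1$ of $\cals\cale^T_2$ under transverse scaling. Establishing $(\star)$ rigorously is the main technical obstacle of the proof.

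Substituting $(\star)$ into the expression for $A'(0)$ and feeding back into \eqref{calscale2eqn} makes every unknown term collapse and yields the stated formula for $F(\xi)$. The Euler--Lagrange equation \eqref{ELeqn} follows by requiring $\frac{d\cals\cale(\xi_t)}{dt}|_{t=0}=0$ for every $\dot\xi\in\gt$: since the map $\dot\xi\mapsto\eta(\dot\xi)$ sends $\gt$ onto $\calh^\xi$, the vanishing of $\int_MF(\xi)\eta(\dot\xi)dv_g$ for all such $\dot\xi$ forces $F(\xi)$ to be $L^2$-orthogonal to $\calh^\xi$, which by Lemma \ref{projlem} is equivalent to $\pi_gF(\xi)=0$.
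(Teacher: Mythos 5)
Your reduction via \eqref{calscale2eqn} and your formula for $A'(0)$ are correct: differentiating the two expressions $\cals\cale^T_2(\xi_t)=\int_Mp_t^2\,dv_{g_t}=\int_Mp_t\,s^T_{g_t}\,dv_{g_t}$ (with $p_t=\pi_{g_t}s^T_{g_t}=\eta_t(\grz_t)$) and eliminating $\int_Mp\,\eta(\dot{\grz})\,dv_g$ does give
\begin{equation*}
A'(0)=2\int_Mp\,(s^T)'(0)\,dv_g-(2n+4)\int_Ms^T_g\,p\,\eta(\dot{\xi})\,dv_g+(n+3)\int_Mp^2\,\eta(\dot{\xi})\,dv_g,
\end{equation*}
and this is a genuinely different route from the paper's, which instead differentiates the defining relation $\langle\pi_gs^T_g-s^T_g,h\rangle=0$ of the projection (Lemma \ref{varlem1}) and then substitutes the variation of $s^T_g$.

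The gap is the identity $(\star)$, which is not established and, as stated, conflicts with the known first variation of the transverse scalar curvature. Lemma \ref{scacurv} gives $(s^T)'(0)=-(2n+1)\Delta\eta(\dot{\xi})+s^T_g\eta(\dot{\xi})$, hence after integrating by parts $\int_Mp\,(s^T)'(0)\,dv_g=\int_M\bigl(s^T_gp-(2n+1)\Delta p\bigr)\eta(\dot{\xi})\,dv_g$. This differs from your $(\star)$ by $\int_M(s^T_g-p)\,p\,\eta(\dot{\xi})\,dv_g$, which has no reason to vanish: $p\,\eta(\dot{\xi})$ is a product of two Killing potentials and in general does not lie in $\calh^\xi$, so the orthogonality of $s^T_g-\pi_gs^T_g$ to $\calh^\xi$ does not apply. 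Your proposed derivation of $(\star)$ (expand the pointwise variation of $s^T_g$ and integrate by parts against $p$) can only reproduce the Lemma \ref{scacurv} version, and your transverse-scaling consistency check cannot detect the discrepancy, since the error term vanishes identically whenever $\eta(\dot{\xi})$ is constant. You therefore need either an independent proof of $(\star)$ or to use the Lemma \ref{scacurv} version, in which case your (correct) expression for $A'(0)$ produces the coefficients $-(2n+2)$ and $(n+3)$ in place of $-2n$ and $(n+1)$; reconciling this with Lemma \ref{varlem1}, whose proof differentiates $\langle\pi_gs^T_g-s^T_g,h\rangle=0$ while holding $h$ fixed rather than letting $h_t=\eta_t(\grw)$ vary in $\calh^{\xi_t}$, is precisely the point your proposal leaves open.
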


\begin{remark}\label{nocscrem}
Note that the formula for the variation of $\cals\cale^T_2$ is given by \eqref{variationthm} without the multiplicative factor and the last term in $F(\xi)$, namely
\begin{equation}\label{variationthm2}
\frac{d\cals\cale_2^T(\xi_t)}{dt}|_{t=0}= \int_M\Bigl(-2ns^T_g\pi_gs^T_g-2(2n+1)\Delta(\pi_gs^T_g) +(n+1)(\pi_gs^T_g)^2\Bigr)\eta(\dot{\xi})dv_g.
\end{equation}
So the Euler-Lagrange equations for variations of $\cals\cale^T_2$ with fixed volume are 
\begin{equation}\label{ELeqnvol}
\pi_g\Bigl(-2ns^T_g\pi_gs^T_g-2(2n+1)\Delta(\pi_gs^T_g)+(n+1)(\pi_gs^T_g)^2 \Bigr)=c
\end{equation}
where $c$ is any constant.

In Examples \ref{sehex} and \ref{sehex2} below we give critical points of $\cals\cale$, hence, solutions of \eqref{ELeqn} that are not cscS. Moreover, those in Example \ref{sehex2} consist of one cscS ray and two extremal rays that are not cscS. So in this case we have two solutions of \eqref{ELeqn} such that $\pi_gs^T_g=s^T_g$ which are not constant.
\end{remark}

For the proof of Theorem \ref{varthm1} we first give some lemmas the first of which was given in \cite{BGS07b} as well as \cite{BHLT15}.
\begin{lemma}\label{scacurv}
For variations over a $C^1$ path of the form \eqref{genpath} with a fixed contact CR structure we have
\begin{equation} \label{pr2}
\dot{s^T_t} = -(2n+1)\Delta (\eta(\dot{\xi}))+s^T \eta(\dot{\xi})\end{equation}
\end{lemma}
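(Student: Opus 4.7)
The plan is to compute $\dot s^T_t|_{t=0}$ by differentiating the Sasaki metric $g_t$ explicitly and then applying the standard formula for the variation of the Riemannian scalar curvature. The identity $s_g = s^T - 2n$ for a Sasaki metric reduces the question to computing $\dot s_g$, and the rigidity of the fixed CR structure makes the variation of $g_t$ very concrete.

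Since $(\cald,J)$ is fixed, the contact distribution $\cald$ does not vary in $t$, so each $\eta_t$ is pointwise proportional to $\eta$, and the normalization $\eta_t(\xi_t)=1$ forces $\eta_t=\eta/\eta(\xi_t)$. Differentiating at $t=0$ (where $\eta(\xi_0)=1$) yields $\dot\eta_0 = -f\,\eta$ with $f := \eta(\dot\xi)$, and $f$ is $\xi_0$-basic because $\dot\xi\in\gt$ commutes with $\xi_0$. Using $g_t = d\eta_t(\cdot,\Phi_t\cdot)+\eta_t\otimes\eta_t$ and the fact that $\Phi_t|_\cald = J$ is independent of $t$, a direct computation in the splitting $TM=\bbr\xi_0\oplus\cald$, together with the Killing identity $\iota_{\dot\xi}d\eta = -df$, gives the symmetric tensor $h := \dot g_0$ with components $h|_{\cald\times\cald} = -f\,g_0|_{\cald\times\cald}$, $h(\xi_0,Y) = df(JY)$ for $Y\in\cald$, and $h(\xi_0,\xi_0) = -2f$; hence $\text{tr}_{g_0} h = -(2n+2)f$.

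Next apply the standard identity $\dot s_g = -\Delta(\text{tr}_g h) + \text{div}\,\text{div}\,h - \langle \Ric, h\rangle$ together with the Sasaki Ricci relations $\Ric(\xi,\xi)=2n$, $\Ric(\xi,\cdot)|_\cald=0$, and $\Ric|_{\cald\times\cald} = \Ric^T - 2g^T$. These yield $-\Delta(\text{tr}_g h)=(2n+2)\Delta f$ and $-\langle\Ric,h\rangle = (s_g+2n)f = s^T f$. Since $s^T = s_g + 2n$ differs from $s_g$ only by a constant, $\dot s^T = \dot s_g$, and the stated formula is reduced to verifying $\text{div}\,\text{div}\,h = -(4n+3)\Delta f$.

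The main obstacle is precisely this last double-divergence calculation. The conformal piece $-fg$ is handled by the routine identity $\text{div}\,\text{div}(-fg) = -\Delta f$. The remaining piece of $h$ (with entries only in the $(\xi_0,\xi_0)$ and $(\xi_0,\cdot)|_\cald$ slots) is where all the structural content of the Sasaki equations enters: one expands $\text{div}\,\text{div}$ of this $\eta\otimes\alpha$-type tensor using $\nabla_X\xi = -\Phi X$, $[\xi_0,\dot\xi]=0$, $\text{div}\,\eta=0$, and the fact that $f$ is basic, to convert every covariant derivative into the transverse Laplacian acting on $f$. The fact that the final coefficient is $2n+1$ — the full Riemannian dimension of $M$ — rather than $2n$ reflects that the variation mixes the $2n$ transverse directions with the Reeb direction, so that the clean cancellation produces exactly $\dot s^T = -(2n+1)\Delta f + s^T f$.
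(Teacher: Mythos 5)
Your overall strategy --- linearize the Riemannian scalar curvature via $\dot s_g = -\Delta(\tr h) + {\rm div}\,{\rm div}\,h - \langle h,\Ric\rangle$ and reduce to $s^T$ by $s_g = s^T-2n$ --- is genuinely different from the source of this lemma (\cite{BGS07b,BHLT15}, where the formula comes from the transverse K\"ahler structure), and it could in principle work. But as written the proof does not close. The step you yourself flag as ``the main obstacle'', the double divergence, is only asserted, and the asserted value ${\rm div}\,{\rm div}\,h=-(4n+3)\Delta f$ cannot be right: no $n$ can appear there. Writing $h=-fg+(\eta\otimes\gamma+\gamma\otimes\eta)-f\,\eta\otimes\eta$ with $\gamma=h(\xi_0,\cdot)|_\cald$, the conformal piece gives $-\Delta f$ (your routine identity), the $\eta\otimes\eta$ piece gives $0$ (because $f$ is basic and ${\rm div}\,\xi=0$), and the mixed piece is disposed of by exactly the identities you list ($\nabla_X\xi=-\Phi X$, $\nabla_\xi\Phi=0$, ${\rm div}(\Phi\,{\grad}\, f)=0$, $\nabla_\xi{\grad}\,f=-\Phi\,{\grad}\,f$), each divergence producing a multiple of $df$ with a dimension-free coefficient. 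The only $n$-dependence in the whole computation enters through $\tr h=-(2n+2)f$ and $\Ric(\xi,\xi)=2n$, so the cancellation you invoke to land on $2n+1$ does not occur by this route.

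A second, related problem is that the mixed component of $h$ is normalization-sensitive in a way that matters. Computing it intrinsically, $h(\xi_0,Y)=\frac{d}{dt}\bigl[\eta_t(\xi_0)\eta_t(Y)+g^T_t(\xi_0-\eta_t(\xi_0)\xi_t,\,Y)\bigr]_{t=0}=-g(\dot\xi,Y)$ for $Y\in\cald$, which equals $\tfrac12 df(JY)$ once $d\eta$, $g$ and $\Phi$ are normalized compatibly with the identities $\nabla_X\xi=-\Phi X$, $\Ric(\xi,\xi)=2n$, $s_g=s^T-2n$ that you use in the curvature terms; taking the displayed convention $g=d\eta\circ(\BOne\times\Phi)+\eta\otimes\eta$ at face value, as you do, yields $df(JY)$ and is inconsistent with those identities. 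Redoing the double divergence with the consistent $h$ gives ${\rm div}\,{\rm div}\,h=0$, and the method then outputs $\dot s^T=-2(n+1)\Delta f+s^Tf$, i.e.\ coefficient $2n+2$ rather than $2n+1$. (You can test this on the weighted $3$-sphere, where $s^T_\bfw=24w_1w_2/(w_1|z_1|^2+w_2|z_2|^2)-8(w_1+w_2)$; differentiating at $\bfw=(1,1)$ with $f=|z_1|^2-|z_2|^2$, so $\Delta f=8f$ and $s^T=8$, gives $\dot s^T=-24f=-4\Delta f+8f$.) So before this argument can stand you must actually carry out the double-divergence computation and reconcile your normalization of $\Phi$, $d\eta$ and $g$ with the one implicit in the stated lemma; as it stands the proposal neither proves its key identity nor reproduces the stated coefficient.
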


Next we have
\begin{lemma}\label{varlem1} 
Consider a $C^1$ path of Reeb vector fields $\xi_t$, $t\in (-\epsilon,\epsilon)$, compatible with a fixed CR structure $(\cald,J)$, then for any $h\in {\calh}^\xi$ 
\begin{equation}\label{variationpis}
\langle \dot{\pi_gs^T_g}, h \rangle = -(2n+1) \langle \Delta^g \eta(\dot{\xi}), h \rangle - n \langle  s^T_g \eta(\dot{\xi}), h \rangle+ (n+1)\langle  \pi_gs^T_g \eta(\dot{\xi}), h\rangle. 
\end{equation}
 \end{lemma}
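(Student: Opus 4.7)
The plan is to differentiate at $t=0$ the defining identity of the $L^2$-orthogonal projection $\pi_{g_t}$ (Lemma~\ref{projlem}): for any $h_t\in\calh^{\xi_t}$,
$$\int_M (\pi_{g_t}s^T_{g_t})\, h_t\, dv_{g_t} = \int_M s^T_{g_t}\, h_t\, dv_{g_t}.$$
Both sides contain three time-dependent factors, so applying the product rule and equating the two $t$-derivatives will express $\langle\dot{\pi_g s^T_g},h\rangle$ in terms of the remaining pairings that appear in \eqref{variationpis}.

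I would begin by collecting the elementary variations. Writing $\eta_t = \eta/\eta(\xi_t) = f_t\,\eta$, one has $f_0 = 1$ and $\dot f|_{0} = -\eta(\dot\xi)$. A direct wedge-product expansion yields $\eta_t\wedge(d\eta_t)^n = f_t^{n+1}\,\eta\wedge(d\eta)^n$, hence $dv_{g_t} = f_t^{n+1}\,dv_g$ with $t$-derivative $-(n+1)\eta(\dot\xi)\,dv_g$ at $t=0$. Given $h\in\calh^\xi$, choose $Y\in\gt$ with $h=\eta(Y)$ and take the path $h_t := \eta_t(Y) = f_t\,\eta(Y)\in\calh^{\xi_t}$, which satisfies $h_0 = h$ and $\dot h|_0 = -\eta(\dot\xi)\,h$. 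Finally, Lemma~\ref{scacurv} supplies $\dot s^T = -(2n+1)\Delta\eta(\dot\xi) + s^T\eta(\dot\xi)$.

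I would then differentiate the defining identity at $t=0$, applying the product rule to each side. On the left, the $\pi_{g_t}s^T_{g_t}$ factor contributes exactly $\langle\dot{\pi_g s^T_g},h\rangle$, while the variations of $h_t$ and $dv_{g_t}$ contribute multiples of $\langle\pi_g s^T_g\,\eta(\dot\xi),h\rangle$. On the right, the $s^T_{g_t}$ factor contributes $\langle\dot s^T,h\rangle$, which Lemma~\ref{scacurv} splits into $-(2n+1)\langle\Delta\eta(\dot\xi),h\rangle$ and $\langle s^T\eta(\dot\xi),h\rangle$, while the other two factors contribute multiples of $\langle s^T\eta(\dot\xi),h\rangle$. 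Using self-adjointness of the basic Laplacian to move $\Delta$ onto the $h$-slot if desired, and solving for $\langle\dot{\pi_g s^T_g},h\rangle$, then produces~\eqref{variationpis}.

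The main obstacle is the careful bookkeeping of the coefficients: the specific combination $-(2n+1)$, $-n$, and $(n+1)$ in the formula arises from the interplay between the exponent $n+1$ in the volume variation, the additional factor of $1$ coming from $\dot h_t$, and the term $s^T\eta(\dot\xi)$ appearing in $\dot s^T$ from Lemma~\ref{scacurv}. One must also keep in mind that although $\pi_{g_t}s^T_{g_t}\in\calh^{\xi_t}$, the pointwise derivative $\dot{\pi_g s^T_g}$ need not itself lie in $\calh^\xi$; only after pairing with $h\in\calh^\xi$ do the contributions from the \emph{a priori} unknown motion of $\pi_t s^T_t$ within the varying subspace $\calh^{\xi_t}$ collapse to the three explicit pairings in~\eqref{variationpis}.
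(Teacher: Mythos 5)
Your overall strategy coincides with the paper's: differentiate the defining identity of the projection from Lemma~\ref{projlem} along the path, feed in the volume variation \eqref{varvol} and Lemma~\ref{scacurv}, and solve for $\langle\dot{\pi_gs^T_g},h\rangle$. The one place you deviate is precisely where your computation stops landing on the stated formula. You replace the fixed test function $h$ by the moving family $h_t=f_t h\in\calh^{\xi_t}$ with $\dot h|_0=-\eta(\dot\xi)h$. Carrying out the product rule with this extra factor, the left-hand side contributes $\langle\dot{\pi_gs^T_g},h\rangle-(n+2)\langle\pi_gs^T_g\,\eta(\dot\xi),h\rangle$ (one unit of $-\eta(\dot\xi)$ from $\dot h_t$ plus $-(n+1)$ from the volume form), the right-hand side contributes $\langle\dot{s^T_g},h\rangle-(n+2)\langle s^T_g\,\eta(\dot\xi),h\rangle$, and after inserting Lemma~\ref{scacurv} you obtain
\begin{equation*}
\langle \dot{\pi_gs^T_g}, h \rangle = -(2n+1) \langle \Delta^g \eta(\dot{\xi}), h \rangle - (n+1) \langle  s^T_g \eta(\dot{\xi}), h \rangle+ (n+2)\langle  \pi_gs^T_g \eta(\dot{\xi}), h\rangle,
\end{equation*}
which differs from \eqref{variationpis} by $\langle(\pi_gs^T_g-s^T_g)\eta(\dot\xi),h\rangle$. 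That discrepancy is exactly the contribution $\langle \pi_gs^T_g-s^T_g,\dot h\rangle$ you introduced, and it does not vanish in general: $\pi_gs^T_g-s^T_g$ is $L^2$-orthogonal to $\calh^\xi$, but $\eta(\dot\xi)h$ is a \emph{product} of two Killing potentials and need not lie in $\calh^\xi$. So the bookkeeping you defer to "the main obstacle" does not, when done carefully along your route, produce the coefficients $-n$ and $n+1$.

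The paper's proof keeps $h$ fixed throughout and differentiates $\langle\pi_{g_t}s^T_{g_t},h\rangle_t=\langle s^T_{g_t},h\rangle_t$ with only the volume form varying; that gives $-(n+1)$ on each side and hence $-n$ and $n+1$ after Lemma~\ref{scacurv}. Your instinct that one must worry about $h\notin\calh^{\xi_t}$ for $t\neq 0$ is a legitimate one, but you cannot both insert the moving test function $h_t$ and arrive at \eqref{variationpis}: you would need either to justify differentiating the identity with $h$ held fixed (as the paper implicitly does), or to show that $\langle\pi_gs^T_g-s^T_g,\,\eta(\dot\xi)h\rangle=0$, which fails in general. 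As written, the proposal does not prove the lemma as stated.
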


\begin{proof} 
Apply Lemma \ref{projlem} with $f=s^T_g$ and use the variation of the volume form 
\begin{equation}\label{varvol}
\Bigl(\frac{d}{dt}dv_t\Bigr)_{t=0}=-(n+1)\eta(\dot{\xi})dv_g
\end{equation}
to give
$$\langle \dot{\pi_gs^T_g}, h \rangle = \langle \dot{s^T_g}, h \rangle -(n+1)\langle s^T_g, h \eta(\dot{\xi})\rangle + (n+1)\langle  \pi_gs^T_g, h \eta(\dot{\xi})\rangle.$$ 
Applying Lemma \ref{scacurv} to this gives the claim.
\end{proof}

\begin{proof}[Proof of Theorem \ref{varthm1}]
Using \eqref{calscale2eqn} we obtain 

\begin{eqnarray}\label{ELeqngen}
\frac{d\cals\cale(\xi_t)}{dt}|_{t=0} &=& (n+1)\frac{\cals\cale^T_2(\xi)^n}{\bfV(\xi)^{n-1}}\int_M\bigl(2\pi_gs^T_g(\dot{\pi_gs^T_g}) -(n+1)(\pi_gs^T_g)^2\bigr)\eta(\dot{\xi})dv_g \notag \\
                &+&(n+1)(n-1)\frac{\cals\cale^T_2(\xi)^{n+1}}{\bfV(\xi)^n}\int_M\eta(\dot{\xi}). 
\end{eqnarray}
Putting $h=\pi_gs^T_g$ in \eqref{variationpis} the first integral becomes
$$
 -2(2n+1)\int_M(\pi_gs^T_g)\Delta \eta(\dot{\xi})dv_g -2n\int_Ms^T_g(\pi_gs^T_g) \eta(\dot{\xi})dv_g +(n+1)\int_M(\pi_gs^T_g)^2\eta({\dot\xi})dv_g.
$$
Integrating the first term by parts twice and rearranging \eqref{ELeqngen} gives \eqref{variationthm}. Then, since $\eta(\dot{\xi})$ is an arbitrary element of $\calh^\xi$, the Euler-Lagrange equations \eqref{ELeqn} follows from \eqref{variationthm} by applying Lemma \ref{projlem}.
\end{proof}


\begin{proposition}\label{critgrad}
Let $\cals$ be a Sasakian structure. Then 
\begin{enumerate}
\item any ray $\gr_\xi\in Z$ is a critical point of $\cals\cale$.
\item if $\bfS_\xi\neq 0$ a critical point of $\bfH$ is a critical point of $\cals\cale$.
\end{enumerate}
\end{proposition}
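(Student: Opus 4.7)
The plan is to reduce both parts to a single condition, namely $\pi_gs^T_g=\bar{\bfs}^T_g$ (a constant). For (1), this is precisely Proposition \ref{Z+prop}: every ray in $Z$ satisfies $\pi_gs^T_g=\bar{\bfs}^T_g$. For (2), Theorem \ref{gradHthm}(4) says that when $\bfS_\xi\neq 0$, a critical point of $\bfH$ is characterized by the same equation $\pi_gs^T_g=\bar{\bfs}^T_g$. So it suffices to verify that this common condition implies the Euler-Lagrange equation \eqref{ELeqn} for $\cals\cale$ is satisfied at $\xi$.

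Once this reduction is in hand, set $c:=\bar{\bfs}^T_g=\pi_gs^T_g$ and substitute into \eqref{ELeqn}. Since $c$ is a constant, $\Delta(\pi_gs^T_g)=0$ and $(\pi_gs^T_g)^2=c^2$; moreover $s^T_g\,\pi_gs^T_g=cs^T_g$, so $\pi_g(s^T_g\pi_gs^T_g)=c\,\pi_gs^T_g=c^2$. Finally,
\[
\cals\cale^T_2(\xi)=\int_M(\pi_gs^T_g)^2\,dv_g=c^2\bfV(\xi),
\]
so $\cals\cale^T_2(\xi)/\bfV(\xi)=c^2$. Substituting these into the expression inside $\pi_g$ in \eqref{ELeqn} and applying $\pi_g$ (which preserves constants) yields
\[
-2nc^2+(n+1)c^2+(n-1)c^2=0,
\]
which is the desired Euler-Lagrange equation. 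In the subcase $\bfS_\xi=0$ of (1) one has $c=0$ and every term vanishes a fortiori.

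The computation is essentially routine; the only conceptual content is the translation, via Proposition \ref{Z+prop} and Theorem \ref{gradHthm}(4), of the two separate hypotheses into the single constancy condition $\pi_gs^T_g=\bar{\bfs}^T_g$. The main (mild) subtlety is that part (2) genuinely requires the hypothesis $\bfS_\xi\neq 0$ in order to invoke the characterization from Theorem \ref{gradHthm}(4), whereas part (1) uses no such hypothesis because Proposition \ref{Z+prop} is unconditional, including the case $c=0$.
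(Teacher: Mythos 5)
Your proof is correct and follows essentially the same route as the paper: reduce both parts to the single condition $\pi_gs^T_g=\bar{\bfs}^T_g$ (via Proposition \ref{Z+prop} and Theorem \ref{gradHthm}(4)) and check that this constant solves the Euler--Lagrange equation \eqref{ELeqn}. The only difference is that you spell out the substitution $-2nc^2+(n+1)c^2+(n-1)c^2=0$ that the paper dismisses as ``easily seen,'' which is a welcome addition.
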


\begin{proof}
We easily see that $\pi_gs^T_g=\bar{\bfs}^T_g$ is a solution of \eqref{ELeqn} proving (1). Item (2) then follows from (4) of Theorem \ref{gradHthm}.
\end{proof}

\begin{remark}\label{Z+2rem}
By Proposition \ref{Z+prop} a Reeb field $\xi$ is in $Z^+$ if and only if $\pi_gs^T_g=\bar{\bfs}^T_g$, and we see that this is a solution to the Euler-Lagrange equation \eqref{ELeqn}; hence, it is a critical point of $\cals\cale^T_2$ under variations of fixed volume. 
\end{remark}

\begin{proposition}\label{nonconstsoln}
Non-extremal critical points of $\cals\cale$ exist. 
\end{proposition}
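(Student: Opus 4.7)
The plan is to combine Proposition \ref{critgrad}(1) with the fact that in general the Killing potential projection $\pi_gs^T_g$ and the transverse scalar curvature $s^T_g$ do not agree. By Proposition \ref{critgrad}(1) every ray $\gr_\xi\in Z$ is a critical point of $\cals\cale$, and by Proposition \ref{Z+prop} such a ray satisfies $\pi_gs^T_g=\bar{\bfs}^T_g$, a constant. Therefore, to exhibit a non-extremal critical point of $\cals\cale$ it suffices to produce a ray in $Z$ on which $s^T_g$ itself is non-constant: for such a ray $s^T_g\neq\pi_gs^T_g$, and Lemma \ref{extremlem} then precludes $\cals$ from being extremal.

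Two distinct strategies for producing such a ray present themselves. The trivial one is to take any non-cscS Sasaki manifold whose Sasaki cone is one-dimensional: in that case $\calh^\xi=\bbr$, so $\pi_gs^T_g=\bar{\bfs}^T_g$ automatically, the Sasaki-Futaki invariant vanishes tautologically by \eqref{chinorm}, and the unique ray is a non-extremal critical point of $\cals\cale$. A more substantial strategy, producing examples with higher-dimensional Sasaki cone, is to restrict the Einstein-Hilbert functional $\bfH$ to $\gt^+(\cald,J)$ and seek interior critical rays: since $\bfH$ is a proper rational function on $\gt^+(\cald,J)$ (as used in the proof of Proposition \ref{Zalgvar}), such critical rays exist after an appropriate slice normalization, and by Theorem \ref{gradHthm}(4) any such critical ray with $\bfS_\xi\neq 0$ lies in $Z^+$.

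To make the second strategy yield a genuinely non-extremal example I would work within an admissible family, for instance the $S^3_\bfw$-join or the lens space bundle over a Hodge manifold considered in the concluding application of the paper, and tune the weights so that the transverse K\"ahler quotient at the Einstein-Hilbert critical ray is obstructed to cscK. The main obstacle is a concrete computation: verifying within the Apostolov-Calderbank-Gauduchon-T{\o}nnesen-Friedman framework that the one-variable profile function realizing the critical ray yields a transverse scalar curvature whose projection onto $\calh^\xi$ is constant but which is itself not constant. Once such parameters are identified, Proposition \ref{critgrad}(1) and Lemma \ref{extremlem} immediately deliver the desired non-extremal critical point of $\cals\cale$.
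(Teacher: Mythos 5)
Your reduction is the same as the paper's: rays in $Z$ are critical points of $\cals\cale$ by Proposition \ref{critgrad}(1), they satisfy $\pi_gs^T_g=\bar{\bfs}^T_g$ by Proposition \ref{Z+prop}, and non-extremality follows from Lemma \ref{extremlem} once $s^T_g$ fails to lie in $\calh^\xi$. The gap is in the existence step, i.e.\ in actually producing a ray for which this failure is unavoidable. In your first strategy, ``take any non-cscS Sasaki manifold whose Sasaki cone is one-dimensional'' conflates two different things: that the particular metric $g_\xi$ determined by $(\cald,J)$ is not cscS, and that the isotopy class $\calS(\xi,\bar{J})$ contains \emph{no} cscS (equivalently, when $\calh^\xi=\bbr$, no extremal) representative. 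Extremality of a ray is a property of the isotopy class --- this is how the paper uses the term throughout (e.g.\ the ``range of admissible extremal structures'' in the examples, and Remark \ref{strnonextexample}) --- so a non-cscS representative of a class that nevertheless admits a cscS metric does not give a non-extremal critical point. Ruling out every representative requires an actual obstruction, and that obstruction is precisely the substantive content of the proof: the paper invokes the Gauntlett--Martelli--Sparks--Yau results (when $c_1(\cald)$ is torsion) to produce minimizers of $\bfH$ with $\bfS_\xi>0$ whose isotopy class carries no cscS metric, together with the properness of $\bfH$ from \cite{BHL17} guaranteeing that a global minimizer (hence a point of $Z^+$, hence a critical point of $\cals\cale$) always exists whether or not it is extremal.

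Your second strategy is closer to the paper's actual argument --- locate the Einstein--Hilbert minimizer in $Z^+$ and then show it is obstructed --- but you leave the obstruction as ``a concrete computation'' to be carried out in the admissible framework, so as written neither route completes the existence claim. To close the argument you should cite an obstruction theorem rather than propose a computation: either the GMSY obstructions in the torsion-$c_1$ case, or the explicit families (including homotopy spheres) in the tables of \cite{BovCo16} for which no ray of the Sasaki cone admits an extremal metric, which is how Remark \ref{strnonextexample} substantiates the proposition.
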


\begin{proof}
The critical points satisfying $\pi_gs^T_g=\bar{\bfs}^T_g$ need not be extremal. Indeed when $c_1(\cald)=0$ (or a torsion class) the results of Gauntlett, Martelli, Sparks, and Yau \cite{GMSY06,MaSpYau06} (cf. Theorem 11.3.14 of \cite{BG05}) give absolute minimum of $\bfH_\xi$ with $\bfS_\xi>0$ that do not have a Sasaki metric of constant scalar curvature. More generally the results of \cite{BHL17} show that a global minimum of $\bfH_\xi$ exists whether $\xi$ is extremal or not, and these are given by the condition $\pi_gs^T_g=\bar{\bfs}^T_g$.
\end{proof}

\begin{remark}\label{strnonextexample}
There are many explicit examples of non extremal critical points in Proposition \ref{nonconstsoln} which include homotopy spheres. See for example the Tables in \cite{BovCo16}. For all of these examples there are no extremal Sasaki metrics in the entire Sasaki cone.
\end{remark}

Thus, we have
\begin{theorem}\label{isothm}
Consider the family of Sasakian structures $\gt^+(\cald,J)$ and let $\xi\in\gt^+(\cald,J)$ have constant scalar curvature $s^T_g\neq 0$. Suppose also that there is no eigenfunction of the Laplacian $\Delta$ in $\calk^\xi$ with eigenvalue $\frac{s^T_g}{2n+1}$. Then its ray is isolated in the space of rays with vanishing transversal Futaki invariant.
\end{theorem}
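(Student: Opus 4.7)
Suppose for contradiction that $\gr_\xi$ is not isolated in $Z$. Since $Z$ is a real algebraic variety by Proposition~\ref{Zalgvar} and locally path-connected by Corollary~\ref{Zlocpath}, I may pick a smooth curve $\xi_t\in\gt^+(\cald,J)$ with $\xi_0=\xi$, $\bfF_{\xi_t}=0$ for every $t\in(-\epsilon,\epsilon)$, and $\dot\xi_0\notin\bbr\xi$. Using the splitting \eqref{orthogsplitgt}, write $\dot\xi_0=\alpha\xi+\beta$ with nonzero $\beta\in\gt_0$, and set $h=\eta(\beta)\in\calh^\xi_0$. The plan is to show $h=0$, delivering the desired contradiction.

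\textbf{First variation.} By Lemma~\ref{Futchi} and Proposition~\ref{Z+prop}, $\bfF_{\xi_t}=0$ is equivalent to $\pi_{g_t}s^T_{g_t}=\bar{\bfs}^T_{g_t}$, which I rewrite as the family of identities
\begin{equation*}
\int_M(s^T_{g_t}-\bar{\bfs}^T_{g_t})\,\eta_t(\grz)\,dv_{g_t}=0\qquad\forall\,\grz\in\gt.
\end{equation*}
Because $s^T_g$ is constant, $s^T_{g_0}-\bar{\bfs}^T_{g_0}\equiv 0$, so differentiating at $t=0$ kills every term except $\int_M(\dot{s^T_g}-\dot{\bar{\bfs}^T_g})\eta_0(\grz)\,dv_g=0$. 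The scalar $\dot{\bar{\bfs}^T_g}$ is a constant, hence drops out when paired against $\eta_0(\grz)$ for $\grz\in\gt_0$. Inserting Lemma~\ref{scacurv} the remaining identity is
\begin{equation*}
\int_M\bigl(-(2n+1)\Delta\eta(\dot\xi)+s^T_g\,\eta(\dot\xi)\bigr)\eta_0(\grz)\,dv_g=0\qquad\forall\,\grz\in\gt_0.
\end{equation*}

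\textbf{Extracting the eigenvalue equation.} The constant part $\alpha$ of $\eta(\dot\xi)=\alpha+h$ contributes zero, since $\int\Delta(\mathrm{const})\,dv_g=0$ and $\int\eta_0(\grz)\,dv_g=0$ for $\grz\in\gt_0$. Self-adjointness of $\Delta$ converts the displayed identity into
\begin{equation*}
\int_M\eta_0(\grz)\bigl(-(2n+1)\Delta h+s^T_g h\bigr)dv_g=0\qquad\forall\,\grz\in\gt_0,
\end{equation*}
so $-(2n+1)\Delta h+s^T_g h$ is $L^2$-orthogonal to $\calh^\xi_0$. Orthogonality to the constants is automatic ($\int\Delta h\,dv_g=0$ and $\int h\,dv_g=0$ because $h\in\calh^\xi_0$), hence $\pi_g\bigl(-(2n+1)\Delta h+s^T_g h\bigr)=0$, i.e.
\begin{equation*}
\pi_g(\Delta h)=\tfrac{s^T_g}{2n+1}\,h.
\end{equation*}

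\textbf{Promotion and conclusion.} The decisive step is to upgrade this projected equation to the genuine eigenvalue equation $\Delta h=\frac{s^T_g}{2n+1}h$, producing an eigenfunction of $\Delta$ lying in $\calk^\xi$ with the forbidden eigenvalue; the hypothesis then forces $h=0$, whence $\dot\xi_0=\alpha\xi\in\bbr\xi$, contradicting the choice of curve. This promotion is the main technical obstacle, because a priori $\pi_g$ discards the part of $\Delta h$ orthogonal to $\calh^\xi$. The resolution uses the Sasakian counterpart of the Lichnerowicz--Matsushima identity on a cscS metric: constancy of the transverse scalar curvature forces the Laplacian of a Killing potential to remain in $\calk^\xi$, so the projection loses no information relevant to the eigenvalue problem, and $\pi_g(\Delta h)=\frac{s^T_g}{2n+1}h$ upgrades to $\Delta h=\frac{s^T_g}{2n+1}h$. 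This closes the argument and shows that $\gr_\xi$ is isolated in $Z$.
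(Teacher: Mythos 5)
Your computation of the first variation is sound and reproduces the content of Lemmas \ref{scacurv} and \ref{varlem1}: differentiating the vanishing of $\bfF_{\xi_t}$ at a cscS structure does yield $\pi_g\bigl(-(2n+1)\Delta h+s^T_gh\bigr)=0$, i.e.\ $\pi_g(\Delta h)=\tfrac{s^T_g}{2n+1}h$. The genuine gap is exactly the ``promotion'' step you flag, and your proposed resolution is not valid: it is \emph{not} true that on a general cscS (or cscK) manifold the Laplacian preserves the space of Killing potentials. That statement holds in the ($\eta$-)Einstein case, where $g(\grr^T,i\partial\bar\partial f)$ is a multiple of $\Delta f$, so the Lichnerowicz operator becomes a polynomial in $\Delta$ and commutes with it --- this is precisely the situation of Remark \ref{isorem} and Matsushima's theorem --- but for a general cscS metric the commutator of $\Delta$ with the Lichnerowicz operator involves the full transverse Ricci tensor and does not vanish. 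Concretely, in the toric case the Laplacian of a moment-map coordinate is $-\sum_i\partial_iu^{ik}$ in Abreu's symplectic-potential formalism, and the csc equation constrains only the trace $\sum_{i,j}\partial_i\partial_ju^{ij}$, not each column sum, so $\Delta$ of a Killing potential is generically not a Killing potential. As written, your argument therefore proves the theorem only under the stronger hypothesis that no $h\in\calh^\xi_0$ solves the \emph{projected} equation $\pi_g(\Delta h)=\tfrac{s^T_g}{2n+1}h$. The paper takes a route that bypasses this: it differentiates the map $\Pi(\xi)=\pi_gs^T_g$ itself, records the variation formula \eqref{varlem2} as an identity of functions (not merely of pairings against $\calh^\xi$), so that $d_\xi\Pi(\eta(\dot{\xi}))=0$ immediately produces an honest eigenfunction, and then concludes via the constant rank theorem that $\Pi$ is locally injective, hence that the preimage of the constants is a single ray. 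To repair your write-up you must either justify the unprojected identity or strengthen the hypothesis.

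There is a second, more technical gap at the very start: local path-connectedness of the algebraic set $Z$ gives a continuous path through a non-isolated point, not a $C^1$ curve with $\dot{\xi}_0\notin\bbr\xi$. To legitimize that step you need the curve selection lemma for semialgebraic sets, applied in a slice transverse to the rays, followed by a reparametrization making the initial velocity nonzero; otherwise a curve such as $\xi_t=(1+t)\xi+t^2\beta$ defeats a purely first-order argument. The paper's architecture --- prove injectivity of the differential $d_\xi\Pi$ on all of $\gt$ and invoke the constant rank theorem --- never has to produce a curve inside $Z^+$ at all, which is one reason to prefer it.
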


\begin{proof} Recall that a Sasaki structure has vanishing transversal Futaki invariant if and only if $\pi_gs^T_g$ is a constant.  
The map $\Pi:\gt^+(\cald,J)\longrightarrow C^\infty(M)^\bbt$ defined by $$\Pi(\xi)= \pi_gs^T_g$$ is a homogeneous map of degree $1$ defined on $\gt^+(\cald,J)$ a convex open subset of a finite dimensional affine space. Identifying the tangent space of $\gt^+(\cald,J)$ with the Lie algebra $\gt$ and the tangent space of the Fr\'echet manifold $C^\infty(M)^\bbt$ with itself, we see that if the differential $d_\xi\Pi : \gt \rightarrow  C^\infty(M)^\bbt$ is injective at a point $\xi$, the constant rank theorem implies that there exists an open neighborhood $U$ of $\xi$ such that the restriction $\Pi:U \rightarrow C^\infty(M)^\bbt$ is injective. Assume moreover, that $\Pi(\xi)$ is a constant, then the pre-image of the constants in $\Pi(U)$ coincides with one isolated ray in $U$. We show that $d_\xi\Pi$ is injective when $\xi$ is cscS. Now from Lemma \ref{varlem1} we have
\begin{equation}\label{varlem2}
d_\xi\Pi(\eta(\dot{\xi}))= \frac{d}{dt}\left(\pi_{g_{\xi_t}}s^T_{g_{\xi_t}}\right)_{|_{t=0}}  = -(2n+1)\Delta^g \eta(\dot{\xi}) - ns^T_g \eta(\dot{\xi})+ (n+1) \pi_gs^T_g\eta(\dot{\xi}). 
\end{equation}
So if $s^T_g$ is constant \eqref{varlem2} becomes
$$d_\xi\Pi(\eta(\dot{\xi})) = -(2n+1) \Delta^g \eta(\dot{\xi}) + s^T_g \eta(\dot{\xi}).$$
Therefore, if $\dot{\xi}$ is colinear to $\xi$, the assumption $s^T_g \neq 0$ implies that $d_\xi\Pi(\eta(\dot{\xi})) \neq 0$. Otherwise, $d_\xi\Pi(\eta(\dot{\xi})) = 0$ gives that $\eta(\dot{\xi})\in\calh^\xi$ is an eigenfunction of $\Delta^g$ with eigenvalue $\frac{s^T_g}{2n+1}$. That is, the hypothesis of the theorem guarantees that $d_\xi\Pi : \gt \rightarrow  C^\infty(M)^\bbt$ is injective. This concludes the proof.  
\end{proof}

\begin{remark}\label{isorem}
In the K\"ahler--Einstein case, the space of Killing potentials coincides with the eigenspace of the first (non-trivial) eigenvalue $\lambda_1^g$ of the Laplacian thanks to results of Matsushima \cite{Mat57b} (see Theorem 3.6.2 of \cite{Gau09b}) which also provides a lower bound (which would read in our notation $s^T_g/2n$) on the eigenvalues of the Laplacian. There is an analogous result in the Sasaki case and this is exactly what we used  (i.e $\lambda_1^g \geq s^T_g/2n > s^T_g/(2n+1)$) in the proof of \cite[Theorem 1.7]{BHLT15} to get a local convexity result in the Sasaki $\eta$--Einstein case. In the toric K\"ahler--Einstein case, the fact that torus invariant Killing potentials (i.e affine linear function on the moment polytope) are eigenfunctions of the same eigenvalue even characterizes K\"ahler--Einstein metrics see \cite[Proposition 1]{LegSenaDias15}. Therefore, it would be surprising that there would be no constant scalar curvature K\"ahler metrics having a Killing potential as eigenfunction of the Laplacian; what we can hope, however, is that if it does the eigenvalue is not as low as $s^T_g/(2n+1)$. 
\end{remark}

Proposition \ref{Zalgvar} and Theorem \ref{isothm} imply 
\begin{corollary}\label{isocor}
Suppose that every cscS metric in the family $\gt^+(\cald,J)$ satisfies the hypothesis of Theorem \ref{isothm}. Then the zero set of rays $Z$ of the Sasaki-Futaki invariant is finite. In particular, the number of cscS rays in $\gt^+(\cald,J)$ is finite.
\end{corollary}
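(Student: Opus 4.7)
My approach is to combine the two results immediately preceding the corollary: Proposition \ref{Zalgvar} provides the global topological structure of $Z$ as a compact real algebraic variety with only finitely many connected components, while Theorem \ref{isothm} supplies the local isolation of each cscS ray under the stated eigenvalue hypothesis.

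First I would invoke Proposition \ref{Zalgvar} to write $Z=C_1\sqcup\dots\sqcup C_N$ as a finite disjoint union of connected components, and note by Corollary \ref{Zlocpath} that each $C_i$ is open (and closed) in $Z$. Next I apply Theorem \ref{isothm}: the standing hypothesis forces every cscS ray $\gr_\xi\in Z$ to be isolated, so $\{\gr_\xi\}$ is simultaneously open and closed in $Z$ and therefore equals one of the $C_i$'s. The assignment of each cscS ray to its component is an injection into the finite set $\{C_1,\dots,C_N\}$, which immediately gives the ``in particular'' statement that the number of cscS rays in $\gt^+(\cald,J)$ is finite.

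For the full assertion that $Z$ itself is finite, one must preclude the existence of connected components containing no cscS ray. A natural route is to examine the differential $d_\xi \Pi$ used in the proof of Theorem \ref{isothm} at an arbitrary $\xi\in Z$: by Proposition \ref{Z+prop}, such a $\xi$ satisfies $\pi_g s^T_g=\bar{\bfs}^T_g$, and the variation formula in Lemma \ref{varlem1} then reduces to
\[
d_\xi\Pi(\eta(\dot\xi)) = -(2n+1)\Delta^g\eta(\dot\xi) - n\, s^T_g\,\eta(\dot\xi) + (n+1)\bar{\bfs}^T_g\,\eta(\dot\xi).
\]
If this operator is injective on $\calh^\xi$ at every $\xi\in Z$, then the same constant-rank argument isolates every ray of $Z$, and combined with the finitely many components this forces $Z$ to be a finite set.

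The main obstacle is precisely establishing this injectivity at non-cscS points of $Z$: since $s^T_g$ is no longer constant when $\xi\in Z$ is not cscS, the condition is not a clean eigenvalue condition for $\Delta^g$ and is not directly furnished by the hypothesis of Theorem \ref{isothm}. I would try to get around this by exploiting the real algebraic/rational structure of $Z$ established in Proposition \ref{Zalgvar}: a positive-dimensional component would carry a smooth curve of Reeb fields along which $\pi_g s^T_g$ stays constant, and one could attempt to derive a contradiction either from the rationality of $\bfS_\xi$ and $\bfH_\xi$ or by showing such a curve must accumulate on a cscS ray, violating its isolation. This extension step is where the technical work lies; the component-counting argument itself is immediate once injectivity is in hand.
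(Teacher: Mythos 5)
Your component-counting argument is exactly the derivation the paper has in mind: the paper offers no proof beyond the assertion that Proposition \ref{Zalgvar} and Theorem \ref{isothm} imply the corollary, and the intended mechanism is precisely that each cscS ray satisfying the eigenvalue hypothesis is an isolated point of $Z$, hence a connected component on its own, so the cscS rays inject into the finite set of components furnished by Proposition \ref{Zalgvar}. For the ``in particular'' clause your write-up is complete and coincides with the paper's route.

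You have, however, put your finger on a genuine issue with the first assertion. Theorem \ref{isothm} isolates only the cscS rays of $Z$, whereas $Z$ is the zero set of rays of the Sasaki--Futaki invariant, i.e.\ the rays with $\pi_gs^T_g=\bar{\bfs}^T_g$; by Proposition \ref{Z+prop} such a ray is cscS only if it is extremal, and Proposition \ref{nonconstsoln} together with Remark \ref{strnonextexample} show that $Z$ may consist entirely of non-extremal rays. A positive-dimensional component of $Z$ containing no cscS ray is therefore not excluded by the two cited results, and your observation that at such a point the linearization $d_\xi\Pi$ is a Schr\"odinger-type operator with the non-constant potential $ns^T_g$, about whose kernel the eigenvalue hypothesis of Theorem \ref{isothm} says nothing, identifies exactly where the implication breaks. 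So the gap you flag is not a defect of your proposal relative to the paper: the paper supplies no argument for this case, and the corollary is safely established by the cited results only in its ``in particular'' clause, or in situations (as in several of the paper's examples) where every ray of $Z$ is extremal and hence cscS. Be aware that your proposed repairs also need real work: a curve inside a component of $Z$ disjoint from the cscS locus has no reason to accumulate on a cscS ray, and the rationality of $\bfS_\xi$ and $\bfH_\xi$ by itself does not force the zero locus of a rational map to be zero-dimensional.
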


\section{The Functionals $\bfH,\cals\cale$ on Lens Space Bundles over Riemann Surfaces}
In this section we study the extremal Sasakian structures on lens space bundles over Riemann surfaces of genus $\calg$ from the point of view of the functionals $\bfH(b)$ and $\cals\cale(b)$. This is a special case of what we have called an $S^3_\bfw$ join \cite{BoTo13,BoTo14a} which generally represents lens space bundles over a Hodge manifold written as $M\star_{\bfl}S^3_\bfw$ where $\bfw =(w_1,w_2)$, $\bfl=(l_1,l_2)$ and the components of both are relatively prime positive integers. In this case since the critical points all belong to a 2-dimensional subcone $\gt^+_\bfw$ of $\gt^+(\cald,J)$, the critical rays are all isolated in the subcone $\gt^+_\bfw$. So Problem \ref{prob1} is answered in the affirmative for the $\bfw$ cone of these Sasaki manifolds. Moreover, Theorem 1.1 of \cite{BoTo14a} says that Problem \ref{prob2} is also answered in the affirmative in this case. 

For ease of discussion we take $M$ to be the constant scalar curvature Sasaki structure on an $S^1$ bundle over a Riemann surface $\grS_\calg$ with its standard Fubini-Study metric. The functional $\bfH$ for these manifolds was studied in \cite{BHLT15}. Choosing a Reeb field in $\gt^+_\bfw$ with coordinates $(v_1,v_2)$ gives a ray $\gr_b\in\gR(\cald,J)$ where $b=v_2/v_1$. So from  \cite{BHLT15} we have
\begin{equation}\label{dN=1HE}
\bfH(b) = \frac{\left(b^2 l_1 w_1+2 b l_2(1-\calg)+l_1 w_2\right)^3}{b^2 (b w_1+w_2)^2}.
\end{equation}
It follows from  \cite{BHLT15} that the critical points of $H(b)$ correspond to the points where either
$H(b)=0$ or $F(b)=0$ where 
\begin{equation}\label{dN=1CSC}
F(b)=b^3 l_1 w_1^2+b^2 (\calg l_2 w_1+2 l_1 w_1 w_2- l_2 w_1)-b (\calg l_2 w_2+2  l_1 w_1 w_2- l_2 w_2)-l_1 w_2^2.
\end{equation}
Note that $F$ is just the Sasaki-Futaki invariant up to a constant multiple. So the zero set of $F$ is $Z$ which consists  precisely of those Sasaki metrics in the two dimensional subcone $\gt^+_\bfw\subset \gt^+(\cald,J)$ that satisfy $\pi_gs^T_g=\bar{\bfs}^T_g$. Indeed, in this case the vanishing of $F$ for a given ray, guarantees the existence of an admissible CSC metric\footnote{Without going into details here, this follows essentially from Section 2.4 of \cite{ACGT08} together with the discussion in Section 5.1 of \cite{BoTo14a}.}. Note that for $\calg>0$ we have the equality $\gt^+_\bfw=\gt^+(\cald,J)$. We also mention that it can be seen from Equation \eqref{dN=1HE} that the zeroes of $\bfH_\xi$ (also $\bfS_\xi$) for which the transverse scalar curvature $s^T_g$ does not vanish identically are all inflection points in this case.

We now consider the Sasaki energy functional $\cals\cale$.
A straightforward computation using results from \cite{ACGT08,BoTo13,BoTo14a,BHLT15} gives
\begin{equation}\label{calscaleeqn}
\cals\cale(b) = \frac{(g_1(b))^3}{b^4  (w_1 b + w_2) \left(b^2 w_1^2+4 b w_1 w_2+w_2^2\right)^3}
\end{equation}
where

\begin{eqnarray}
g_1(b) & = & b^5 l_1^2 w_1^3+3 b^4 l_1^2 w_1^2 w_2 + b^3w_1\left(  l_2^2(\calg-1)^2 +2(1- \calg) l_1 l_2 w_2 - l_1^2 w_1 w_2 \right).  \notag\\
            &+& 2 b^2 w_2\left(l_2^2(\calg-1)^2 +2(1- \calg) l_1 l_2 w_1 - l_1^2 w_1 w_2  \right) +3 b l_1^2 w_1 w_2^2+l_1^2 w_2^3.
\end{eqnarray} 

Now we observe that $\lim_{b\rightarrow 0} \cals\cale(b) = \lim_{b\rightarrow +\infty} \cals\cale(b) = +\infty$
and the derivative equals
$$
\cals\cale'(b) = \frac{4F(b) \left(g_1(b)\right)^2 g_2(b)}{b^5 (b w_1+w_2)^2 \left(b^2 w_1^2+4 b w_1 w_2+w_2^2\right)^4}
$$
where $F(b)$ is given by Equation \eqref{dN=1CSC} and $g_2(b)$ is given by
$$
\begin{array}{ccl}
g_2(b)& = & b^5 l_1 w_1^4\\
\\
&+ & b^4(- \calg l_2 w_1^3+7  l_1 w_1^3 w_2+l_2 w_1^3)\\
\\
&+& b^3(-2  \calg l_2 w_1^2 w_2+3  l_1 w_1^3 w_2+10  l_1 w_1^2 w_2^2+2  l_2 w_1^2 w_2)\\
\\
&+& b^2(-2  \calg l_2 w_1 w_2^2+10  l_1 w_1^2 w_2^2+3  l_1 w_1 w_2^3+2 l_2 w_1 w_2^2)\\
\\
&+& b(- \calg l_2 w_2^3+7  l_1 w_1 w_2^3+ l_2 w_2^3)\\
\\
&+ &l_1 w_2^4.
\end{array}
$$
Clearly $g_1(b)=$ is equivalent to $\cals\cale(b)=0$, which in turn corresponds to $\pi_g s^T_g$ being constantly zero. In the admissible case at hand this implies that the ray has a CSC admissible metric of vanishing constant transverse scalar curvature. In particular, $F(b)$ would vanish as well. We also mention that if $\pi_g s^T$ is constantly zero, then  we must have $\calg >1$.
Thus, critical points of $\cals\cale$ other than the ones coming from CSC rays, correspond to solutions to $g_2(b)=0$ for
$b>0$ with $b\neq w_2/w_1$. Note that for $\calg \leq 1$, there are no such solutions, but for $\calg \geq 2$ and $l_2$ sufficiently large, we do indeed get solutions to $g_2(b)=0$ with $b>0$. Moreover, as the examples below will show us, these extra critical points may or may not be extremal, and in general they do not arise the same way as in the case for $\bfH(b)$. Note also that if $\cals\cale(b)$ has precisely one critical point, then, due to the limit behavior, this has to be an absolute minimum of $\cals\cale(b)$.  

\subsection{Explicit Examples}
Next we give examples explicitly describing the critical points of $\cals\cale$ and $\bfH$ and their relationship. We know that the critical sets ${\rm Crit}$ of both functionals contain the zero set $Z$ of the Sasaki-Futaki invariant $\bfF$;  however, generally ${\rm Crit}(\bfH)$ and ${\rm Crit}(\cals\cale)$ are different. 

\begin{example}\label{arbgen}
Here we take $\bfl=(1,1)$ and $\bfw=(w_1,w_2)=(3,2)$ so that $M=M_3\star_{1,1}S^3_{3,2}$ is an $S^3$ bundle over a Riemann surface 
$\grS_\calg$. 
One can check that for $\calg\leq 3$, both $\bfH(b)$ and $\cals\cale(b)$ have only one critical point (a global minimum), located at the $b$-value corresponding to the CSC ray.
For $\calg\geq 4$, 
$$\bfH(b)=\frac{\left(3 b^2-2(\calg-1)b+2\right)^3}{b^2 (3 b+2)^2}$$
has three distinct critical points. However, by using Descartes rule of signs (giving the maximum possible number of positive real roots) 
on $g_2(b)$ for $\calg \leq 15$  supplemented by a manual check for $\calg = 16,17$, we see that
$$\cals\cale(b) = \frac{\left(27 b^5+54 b^4+6 b^3 \calg^2-36 b^3 \calg-6 b^3+4 b^2 \calg^2-32 b^2 \calg+4 b^2+36 b+8\right)^3}{b^4 (3 b+2) \left(9 b^2+24 b+4\right)^3}$$
has only one critical point for $\calg=0,1,\dots,17$. For $\calg \geq 18$ it can be checked that $g_2(b)$ has two distinct positive real roots, none of which correspond to the unique CSC ray. These zeroes are outside of the extremal range.

As a more specific example within this example, let us suppose that $\calg =4$. One may check that in this case every ray in the Sasaki cone has an admissible extremal Sasaki metric. The three critical points of $\bfH(b)$ are the inflection points at $b=\frac{1}{3} \left(3-\sqrt{3}\right)$ and  $b=\frac{1}{3} \left(\sqrt{3}+3\right)$, plus the location of the global minimum, $b \approx  0.81$, corresponding to the CSC ray. The last value is then also the location of the global minimum (and only extremum point) of $\cals\cale$.
\end{example}

\begin{example}\label{sehex}
The Sasaki manifold $M$ is a lens space bundle over a genus 2 Riemann surface $\grS_2$ which is represented as a Sasaki join $M=M_3\star_{1,101}S^3_{3,2}$ where $M_3$ is the constant sectional curvature $-1$ Sasaki structure on the primitive $S^1$ bundle over $\grS_2$. We refer to \cite{BoTo13} for this join construction and to \cite{BoTo14a,BHLT16} for the general description of Sasaki joins.  The Einstein-Hilbert functional for $M$ is treated in Example 5.8 of \cite{BHLT15}. Now the Sasaki cone $\gt^+(\cald,J)$ of $M$ is 2 dimensional represented by the first quadrant $v_1> 0,v_2>0$. Then setting $b=\frac{v_2}{v_1}$ in \cite{BHLT15} we showed that $\bfH(b)$ has three critical points located at $b\approx 0.099,0.685,67.3$. Moreover, the range of admissible extremal\footnote{Whether there exist extremal Sasaki metrics on $M$ in the same class $(\calS,\bar{J})$ that are not admissible is an open question at this time, although there are expected to be none.} structures in the Sasaki cone is the open interval $(b_1,b_2)$ with $b_1\approx0.295$ and $b_2\approx 1.455$. Only one of the critical points, $b\approx 0.685$ lies in this range and it is a local minimum with constant scalar curvature. The two remaining critical points lie outside of the admissible extremal range, and they are inflection points corresponding to $\bfS_b=0$. 

Now consider the Sasaki energy functional 
\begin{equation}\label{sasenergyex}
\cals\cale(b)=\frac{\left(27 b^5+54 b^4+58746 b^3+38356 b^2+36 b+8\right)^3}{b^4 (3 b+2) \left(9 b^2+24 b+4\right)^3}.
\end{equation}
This also has three critical points $b\approx 0.023, 0.685, 30.3$. Note also that the functional $\cals\cale^T_2$ has the same critical points. However, we now find that the cscS metric $b\approx 0.685$ represents a local maximum of $\cals\cale(b)$, while both $b \approx 0.023$ and $b \approx 30.3$ lie outside of the admissible extremal range
and both represent local minima with the latter being an absolute minimum.

Notice that, although $\bfH$ and $\cals\cale$ have the same number of critical points, two of them are inflections points of $\bfH$ with $\bfS_b=0$ and $s^T_g$ is not identically zero. They are not critical points of $\cals\cale$; nevertheless, $\cals\cale$ has two critical points that are not critical points of $\bfH$.
\end{example}

\begin{example}\label{sehex2}
As a variant of Example \ref{sehex}, we can calculate that for $\calg =2$, $l_1=1$, $l_2=19$, $w_1=3$, and $w_2=2$,
$$\cals\cale (b) = \frac{\left(27 b^5+54 b^4+1674 b^3+964 b^2+36 b+8\right)^3}{b^4 (3 b+2) \left(9 b^2+24 b+4\right)^3}.$$
Numeric computer calculations indicate that 
here $b \approx 0.4466$ and $b \approx 2.497$ are relative minima (with the latter being the absolute minimum) while
$b \approx 0.7335$ is a relative maximum corresponding to the CSC ray. 
We can also numerically check that here the range of admissible extremal structures in the Sasaki cone is the open interval $(b_1,b_2)$ with $b_1\approx 0.0472$ and $b_2\approx 5.93$. 
Thus the critical points of $\cals\cale$ all correspond to admissible extremal rays.
Comparatively, $b \approx 0.7335$ is the location of a local (global) minimum of $\bfH(b) = \frac{\left(3 b^2-38 b+2\right)^3}{b^2 (3 b+2)^2}$ and this function has inflection points at
$b\approx 0.05285$ and $b\approx 12.61$ corresponding to $\bfS_b=0$. Again as in Example \ref{sehex} $\bfH$ and $\cals\cale$ have the same number of critical points, but two of them play distinct roles in the two functionals. Moreover, as mentioned in Remark \ref{nocscrem} the critical points of $\cals\cale$ are all extremal.
 \end{example}

\begin{example} We consider a similar lens space bundle but now over a Riemann surface of genus $\calg =0$ in which case $M$ is an $S^3$ bundle over $S^2$. As before we put $l_1=1$, $l_2=101$, $w_1=3$, and $w_2=2$ which implies that $M$ is the non-trivial $S^3$ bundle over $S^2$. In this case the admissible extremal range is the entire first quadrant $v_1>0, v_2>0$ which is the so-called $\bfw$ subcone $\gt^+_\bfw$ of the 3 dimensional Sasaki cone $\gt^+(\cald,J)$. Restricted to $\gt^+_\bfw$ both $\bfH$ and $\cals\cale$ have precisely three critical points. For $\cals\cale$ we have
$$\cals\cale(b) =\frac{ (8 + 36 b + 43204 b^2 + 63594 b^3 + 54 b^4 + 
  27 b^5)^3}{(b^4 (2 + 3 b) (4 + 24 b + 9 b^2)^3)}.$$
The $b$ values corresponding to CSC rays are the only critical points of $\cals\cale$. In this case we get multiple cscS rays corresponding to $b\approx 0.022$, $b\approx 0.644$, and $b\approx 31.67$. The value in the middle corresponds to a local maximum whereas the other two values are locations of relative minima. Note that the existence of multiple cscS rays were first discovered in \cite{Leg10} for precisely these types of Sasaki manifolds. One can check that 
$$\bfH(b)=\frac{(2+202b+3b^2)^3}{b^2(3b+2)}$$ 
and that $\bfS_b>0$, so all critical points of $\bfH(b)$ are cscS rays. Thus, in this case the critical points of $\bfH$ and $\cals\cale$ coincide when restricted to the $\bfw$ subcone of $\gt^+(\cald,J)$. The numerators of the differentials of $\bfH$ and $\cals\cale$ have common factors.
\end{example}

\def\cprime{$'$} \def\cprime{$'$} \def\cprime{$'$} \def\cprime{$'$}
  \def\cprime{$'$} \def\cprime{$'$} \def\cprime{$'$} \def\cprime{$'$}
  \def\cdprime{$''$} \def\cprime{$'$} \def\cprime{$'$} \def\cprime{$'$}
  \def\cprime{$'$}
\providecommand{\bysame}{\leavevmode\hbox to3em{\hrulefill}\thinspace}
\providecommand{\MR}{\relax\ifhmode\unskip\space\fi MR }
\providecommand{\MRhref}[2]{%
  \href{http://www.ams.org/mathscinet-getitem?mr=#1}{#2}
}
\providecommand{\href}[2]{#2}

\end{document}